\def\?[#1]{\textbf{[#1]}\marginpar{\Large{\textbf{??}}}}
\let\epsilon=\varepsilon % sorry Knuth
\newcommand{\cM}{\mathcal M}
\newcommand{\R}{\mathbb R}
\newcommand{\C}{\mathbb C}
\newcommand{\cinf}{C^{\infty}}
\newcommand{\dd}{\partial}
\newcommand{\sgn}{\mathrm{sgn}\,}
\newcommand{\proj}{\mathrm{proj}}
\newcommand{\tr}{\mathrm{tr}}
\newcommand{\la}{\langle}
\newcommand{\ra}{\rangle}
\newcommand{\Di}{\mathcal{D}}
\newcommand{\Res}{\mathrm{Res}}
\newcommand{\im}{\mathrm{Im\,}}
\newcommand{\re}{\mathrm{Re\,}}
\newcommand{\dvol}{d\mathrm{vol}}
\theoremstyle{plain}
\newtheorem{thm}{Theorem}
\newtheorem{lem}[thm]{Lemma}
\newtheorem{prop}[thm]{Proposition}
\newtheorem{cor}[thm]{Corollary}
\theoremstyle{definition}
\newtheorem{defn}{Definition}[section]
\theoremstyle{remark}
\newtheorem*{rem}{Remark}
\title{Dynamical Zeta Functions in the Nonorientable Case}
\author{Yonah Borns-Weil}
\address{Department of Mathematics, University of California, Berkeley, CA 94720}
\email{yonah\_borns-weil@berkeley.edu}
\author{Shu Shen}
 \address{Institut de Math\'ematiques de Jussieu-Paris Rive Gauche, 
Sorbonne Universit\'e,  4 place Jussieu, 75252 Paris Cedex 5, France.}
\email{shu.shen@imj-prg.fr}
\begin{document}
\maketitle

\begin{abstract}
    We use a simple argument to extend the microlocal proofs of meromorphicity of dynamical zeta functions to the nonorientable case. In the special case of geodesic flow on a connected non-orientable negatively curved  closed surface, we compute the order of vanishing of the zeta function at the zero point to be the first Betti number of the surface.
\end{abstract}

\section{Background}

In this note we use a simple geometric argument to extend the results of Dyatlov and Zworski \cite{dz,dzsurfaces} and of Dyatlov and Guillarmou \cite{dglong,dgshort} to Axiom A flows with nonorientable stable and unstable bundles. It is classically known that on a closed manifold there are countably many closed orbits of such flows, and therefore one can define the \emph{Ruelle zeta function} $$\zeta_R(\lambda)=\prod_{\gamma^{\sharp}}\left(1-e^{i\lambda T_{\gamma}^{\sharp}}\right)$$ where the product is taken over all primitive closed geodesics $\gamma^{\sharp}$ with corresponding periods $T_{\gamma}^{\sharp}$.
%\footnote{We remark that some papers including \cite{dglong} and \cite{dgshort} instead choose to define the Ruelle zeta function as $$\zeta_R(\lambda)=\prod_{\gamma^{\sharp}}\left(1-e^{-\lambda T_{\gamma}^{\sharp}}\right).$$}
Note that by \cite[Lemma 1.17]{dglong} and \cite[Section 3]{dgshort}, this product converges for $\im(\lambda)\gg1$ large enough. The meromorphic continuation of $\zeta_R$ to all of $\C$ was conjectured by Smale \cite{smale}, and proved by Fried \cite{fried} under analyticity assumptions. The case of smooth Anosov flows was first answered by Giulietti, Liverani and Policott \cite{glp} and then with microlocal methods by Dyatlov and Zworski \cite{dz} for manifolds with orientable stable and unstable bundles, and was extended to Axiom A flows by Dyatlov and Guillarmou \cite{dglong,dgshort} under the same orientability assumptions. In \cite[Appendix B]{glp}, the authors also outlined ideas for removing the orientability assumptions.

We remove the orientability assumption and give a full proof for Axiom A flows.  Specifically, we shall show

%Do we need compactness?
\begin{thm}\label{main}
If $(\phi_t)_{t\in\R}$ is an Axiom A flow on a closed manifold, the Ruelle zeta function $\zeta_R$ extends to a meromorphic function on $\C$.
\end{thm}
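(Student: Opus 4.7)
The plan is to reduce to the orientable setting by passing to a suitable finite cover, then descend using equivariance of the deck group action.  The obstruction to orientability of $E^s$ and $E^u$ is measured by Stiefel--Whitney classes in $H^1(M;\Z/2)$; these determine a connected cover $\pi \colon \tilde M \to M$ of degree at most four on which $\pi^*E^s$ and $\pi^*E^u$ are both orientable.  The flow $\phi_t$ lifts uniquely to an Axiom A flow $\tilde\phi_t$ on $\tilde M$ with orientable stable/unstable bundles, whose basic sets are precisely the lifts of those of $\phi_t$.  Applying the Dyatlov--Guillarmou theorem \cite{dglong,dgshort} upstairs therefore yields meromorphic continuation of the Ruelle zeta function $\tilde\zeta_R$ of $\tilde\phi_t$ to all of $\C$.

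Next I would relate $\tilde\zeta_R$ and $\zeta_R$ by a direct orbit count.  Consider first the case where $\pi$ is a connected double cover, with monodromy character $\epsilon \colon \pi_1(M) \to \{\pm 1\}$.  A primitive closed orbit $\gamma^\sharp$ of $\phi_t$ lifts either to two primitive closed orbits upstairs of the same period (if $\epsilon(\gamma^\sharp) = +1$) or to a single primitive orbit of doubled period (if $\epsilon(\gamma^\sharp) = -1$).  Rearranging the resulting product yields the factorization
\[
\tilde\zeta_R(\lambda) \;=\; \zeta_R(\lambda)\,\zeta_R^-(\lambda), \qquad \zeta_R^-(\lambda) \;:=\; \prod_{\gamma^\sharp}\bigl(1 - \epsilon(\gamma^\sharp)\, e^{i\lambda T_{\gamma}^\sharp}\bigr).
\]
Thus meromorphicity of $\zeta_R$ follows once we show that the twisted zeta function $\zeta_R^-$ is separately meromorphic; the at-most-fourfold cover case is handled by iterating this construction.

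To extend $\zeta_R^-$ meromorphically, I would exploit the nontrivial deck transformation $\sigma \colon \tilde M \to \tilde M$.  Since $\sigma$ commutes with $\tilde\phi_t$, it commutes with every Ruelle transfer operator, anisotropic Sobolev space, and (meromorphically continued) resolvent appearing in the Dyatlov--Guillarmou proof.  Decomposing the upstairs dynamical determinants along the $\pm 1$-eigenspaces of $\sigma^*$ then splits each of them into two meromorphic factors: the $+1$-invariant part reproduces the dynamical determinants for $\phi_t$ on $M$ and recovers $\zeta_R$, while the $-1$-anti-invariant part yields $\zeta_R^-$, interpreted as the Ruelle zeta function on $M$ twisted by the flat line bundle with monodromy $\epsilon$.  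The main obstacle is verifying that the microlocal constructions of \cite{dglong} are compatible with this $\Z/2$-decomposition --- in particular, that the anisotropic Sobolev spaces near the trapped set can be chosen $\sigma$-equivariantly, so that the resolvents restrict to each isotypic component while preserving their Fredholm properties.  Once this compatibility is checked, the orbit-level factorization lifts to the level of dynamical determinants and completes the meromorphic continuation of $\zeta_R$.
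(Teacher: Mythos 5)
Your approach is genuinely different from the paper's. The paper works entirely downstairs on $M$: it twists the bundles $\mathcal{E}_0^k$ by the flat orientation line bundle $o(E_s)$ and observes, via the Guillemin trace formula, that the resulting factor $\sgn\det(\mathcal{P}_\gamma|_{E_s})$ exactly absorbs the absolute value $|\det(I-\mathcal P_\gamma)|$ appearing in the trace; this reproduces the factorization of $\zeta_K$ into the $\zeta_{K,k}$ with no orientability hypothesis and no cover. You instead pass to a finite cover, apply the orientable theorem there, and descend via an orbit-count factorization. There are two problems.

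The first is a genuine gap in the Axiom~A case, which is the whole point of Theorem~\ref{main}. For an Axiom~A flow the bundles $E_s,E_u$ are only defined over the nonwandering/basic sets $\mathcal{K}=K_1\sqcup\cdots\sqcup K_N$, which are closed invariant sets, typically fractal, and not submanifolds of $M$. The orientation obstruction $w_1(E_s)$ therefore lives in $H^1(\mathcal{K};\Z/2)$, and your construction needs it to lie in the image of the restriction map $H^1(M;\Z/2)\to H^1(\mathcal{K};\Z/2)$ in order to be killed by a cover of $M$. This fails in general: for instance, a basic set consisting of a single closed orbit $\gamma\subset S^3$ whose Poincar\'e map reverses orientation on $E_s$ gives a nontrivial class in $H^1(\gamma;\Z/2)\cong\Z/2$ while $H^1(S^3;\Z/2)=0$, so no finite cover of $M$ orients $E_s$. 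Replacing $M$ by a small neighborhood of $\mathcal K$ does not obviously help, since such neighborhoods need not retract onto $\mathcal K$. In the Anosov case ($\mathcal K=M$) this issue disappears and your cover exists, but that is precisely the case already settled in \cite{dz,glp}.

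The second issue is more of an observation. Once you admit the factorization $\tilde\zeta_R=\zeta_R\cdot\zeta_R^-$, the remaining task is to continue $\zeta_R^-$, which (as you say) is the Ruelle zeta function on $M$ twisted by the flat line bundle $L_\epsilon$ with monodromy $\epsilon$. But the microlocal machinery of \cite{dz,dglong} continues a twisted zeta function for any smooth flat line bundle with a flow lift, so $\zeta_R^-$ is meromorphic by the same argument --- no equivariance or $\sigma$-decomposition of anisotropic spaces is needed. The catch is that once you are willing to twist by a flat line bundle and run the machinery, you may as well twist by $o(E_s)$ directly on $M$ and skip the cover and the factorization altogether. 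That observation, together with the sign identity $|\det(I-\mathcal P_\gamma)|=(-1)^{\dim E_s}\sgn\det(\mathcal P_\gamma|_{E_s})\det(I-\mathcal P_\gamma)$, is exactly the paper's Lemma~\ref{lemdet}, and it applies uniformly to each basic set (working locally near $K$, where $o(E_s)$ is always defined), avoiding the global covering obstruction above.
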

The definition of an Axiom A flow is given as Definition \ref{aaf}.

We then restrict to the case of contact Anosov flow on a $3$-manifold, and study the order of vanishing of $\zeta_R$ at $\lambda=0$. An important example is when $M=S^*\Sigma$, the cosphere bundle of a  connected negatively curved closed surface $\Sigma$, and $(\phi_t)_{t\in \mathbb R}$ is geodesic flow \cite{Anosov67}. This problem was treated in \cite{dzsurfaces} in the case where the stable bundle is orientable, and it was shown that the order of vanishing is $b_1(M)-2$, where $b_1(M)$ is the first Betti number of $M$.

We shall show that for nonorientable stable bundle, the analogous result is the following:

\begin{thm}\label{contact}
Let $(\phi_t)_{t\in\R}$ be the Reeb flow on a connected contact closed $3$-manifold.  If $(\phi_t)_{t\in\R}$ is Anosov with nonorientable stable bundle $E_s$, the Ruelle zeta function has vanishing order at $\lambda=0$ equal to $b_1(\mathscr{O}(E_s))$, the dimension of the first de Rham cohomology  with coefficients in the orientation line bundle of $E_s$.
\end{thm}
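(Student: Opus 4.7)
The plan is to adapt the argument of Dyatlov--Zworski \cite{dzsurfaces} from the orientable setting, systematically replacing scalar objects by sections of the orientation line bundle $o(E_s)$ whenever the nonorientability of $E_s$ (equivalently of $E_u$) would otherwise obstruct a step. First I would use the factorization
\[
\zeta_R(\lambda) = \zeta_0(\lambda)/\zeta_1(\lambda),
\]
where $\zeta_k$ weights each closed orbit by the trace of the induced Poincar\'e return map on $\Lambda^k E_u^\ast$. Since $E_u$ is a nonorientable real line bundle, $\zeta_1$ is naturally a twisted dynamical zeta function with coefficients in $o(E_u)$: the holonomy of $E_u$ around each orbit contributes a sign $\pm 1$ to the corresponding factor.

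Next, I would apply the extended meromorphicity result (Theorem \ref{main}, upgraded to line-bundle coefficients via the same microlocal argument) to identify the order of vanishing of each $\zeta_k$ at $\lambda=0$ with the dimension $m_k(0)$ of the generalized resonant states of $-iX$ on an anisotropic Sobolev space of sections of $\Lambda^k E_u^\ast$. This yields
\[
\operatorname{ord}_{\lambda=0}\zeta_R \;=\; m_0(0) - m_1(0).
\]

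The third step is to promote these generalized resonances to actual kernel elements via a semisimplicity argument. Because $(\phi_t)_{t\in\R}$ is contact, $d\alpha$ restricts to a symplectic form on $E_s\oplus E_u$, canonically trivializing $o(E_s)\otimes o(E_u)$ and in particular identifying $o(E_s)\cong o(E_u)$. This lets the pairing between resonant and coresonant states used in \cite{dzsurfaces} go through with twisted coefficients, so the resonance at $0$ remains semisimple: $m_k(0) = \dim(\ker X|_{V_k})$, where $V_k$ denotes the space of resonant states in degree $k$.

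Finally, I would identify $\ker X|_{V_k}$ with an appropriate piece of the twisted de Rham cohomology $H^\bullet(M;o(E_s))$, by wedging resonant sections with the contact form $\alpha$ and inverting the microlocal resolution as in the orientable case. Taking the alternating sum and using $H^0(M;o(E_s)) = H^3(M;o(E_s)) = 0$ (which holds since $o(E_s)$ is a nontrivial real line bundle on the connected closed $3$-manifold $M$, together with Poincar\'e duality) should yield the claimed formula $b_1(o(E_s))$. The principal obstacle lies in this last step: one must carefully extend the microlocal decomposition of the generalized kernel of $-iX$ to distributional sections of nontrivial line bundles, and verify that the Chern--Simons-type constructions producing closed twisted forms respect the orientation twist throughout.
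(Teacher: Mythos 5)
Your high-level strategy (twist by the orientation bundle and redo the Dyatlov--Zworski argument) is the right one, but there are two substantive problems with the proposal.

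First, the factorization $\zeta_R = \zeta_0/\zeta_1$ with $\zeta_k$ weighting orbits by $\tr \Lambda^k \mathcal{P}_\gamma|_{E_u^*}$ is not correct for the Ruelle zeta. The Guillemin trace formula introduces the denominator $|\det(I - \mathcal{P}_\gamma)|$, and the alternating sum of traces over $\Lambda^k E_u^*$ produces $\det(I - \mathcal{P}_\gamma^{-1}|_{E_u})$, which differs from $|\det(I - \mathcal{P}_\gamma)|$ by both the stable factor $\det(I - \mathcal{P}_\gamma|_{E_s})$ and an unstable Jacobian $\det(\mathcal{P}_\gamma|_{E_u})$. Even if you were to repair this with an extra Jacobian weight, $\Lambda^k E_u^*$ is only a H\"older-continuous bundle, so you would lose the smoothness needed for the microlocal resolvent construction. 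The paper instead uses the smooth bundles $\mathcal{E}_0^k = \Lambda^k(T^*M) \cap \ker\iota_V$, so that $\sum_k (-1)^k \tr \Lambda^k \mathcal{P}_\gamma$ recovers $\det(I - \mathcal{P}_\gamma)$ on $E_s \oplus E_u$, giving the three-term factorization $\zeta_R = \zeta_1/(\zeta_0 \zeta_2)$ and $m_R(0) = m_1(0) - m_0(0) - m_2(0)$.

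Second, and related, the claim that only $\zeta_1$ is ``naturally'' twisted by $o(E_u)$ misses where the twist is actually needed. The point of twisting is to remove the absolute value in $|\det(I - \mathcal{P}_\gamma)|$: one has $|\det(I - \mathcal{P}_\gamma)| = (-1)^{\dim E_s} \sgn(\det \mathcal{P}_\gamma|_{E_s}) \det(I - \mathcal{P}_\gamma)$, and the extra sign $\sgn(\det \mathcal{P}_\gamma|_{E_s})$ is exactly the holonomy of $o(E_s)$ around $\gamma$. This sign appears for every degree $k$, so the paper twists all three $\zeta_k$ by $o(E_s)$, setting $\widetilde{\mathcal{E}_0^k} = \mathcal{E}_0^k \otimes o(E_s)$. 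With that fix, the remainder of your outline (semisimplicity via a pairing, Hodge-type decomposition identifying resonant $1$-forms with $H^1(M; o(E_s))$, and $H^0(M; o(E_s)) = 0$ because $o(E_s)$ is a nontrivial flat line bundle) matches the paper's Propositions 3.6--3.10 in spirit, though the paper establishes $m_0(0) = m_2(0) = 0$ directly from $H^0(M; o(E_s)) = 0$ rather than via Poincar\'e duality with $H^3$.
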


The orientation line bundle is reviewed in Definition \ref{ob}.

 In the special case of the geodesic flow on  $M=S^*\Sigma$ with $\Sigma$ nonorientable, 
 %is a cosphere bundle of a connected nonorientable negatively curved closed surface $\Sigma$ and $(\phi_t)_{t\in \mathbb R}$ is geodesic flow, 
 the vanishing order at $\lambda=0$ is given by $b_1(\Sigma)$, as is shown in Proposition \ref{Gysin}. This is in contrast to the orientable case, in which it is $b_1(\Sigma)-2$. %where $\chi$ denotes the Euler characteristic.

More precisely, let $\chi'(\Sigma)$ be the derived Euler characteristic of $\Sigma$, i.e., 
$$
\chi'(\Sigma)=\sum_{i=0}^2(-1)^ii b_i(\Sigma)=\begin{cases}
-b_1(\Sigma)+2, &\text{if  $\Sigma$ is orientable},\\
-b_1(\Sigma),&\text{otherwise}. 
\end{cases}
$$

\begin{cor}\label{c3}
If $(\phi_t)_{t\in\R}$ is the geodesic flow on the cosphere bundle of a  connected negatively curved closed surface (orientable or not), the Ruelle zeta function has vanishing order at $\lambda=0$ equal to $-\chi'(\Sigma)$.
\end{cor}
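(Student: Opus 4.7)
The plan is to reduce the statement to the two cases (orientable and nonorientable) already addressed in the paper and verify that both expressions collapse to $-\chi'(\Sigma)$ via the displayed formula for the derived Euler characteristic. Since negative curvature forces $\chi(\Sigma)<0$, and in particular $\Sigma$ is not $S^2$ or $\R P^2$, we may use that $T\Sigma$ has nonvanishing Euler class (in the orientable case) or nonvanishing twisted Euler class.

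First I would dispose of the orientable case by quoting the main theorem of \cite{dzsurfaces}, which gives the vanishing order as $b_1(M)-2$ with $M=S^*\Sigma$. The task reduces to identifying $b_1(S^*\Sigma)$ with $b_1(\Sigma)$, which is a direct application of the Gysin sequence for the circle bundle $S^*\Sigma\to\Sigma$: the connecting map $H^0(\Sigma;\R)\to H^2(\Sigma;\R)$ is cup-product with the Euler class of $T\Sigma$, which is nonzero because $\chi(\Sigma)\ne 0$. Consequently the Gysin sequence yields $H^1(S^*\Sigma;\R)\cong H^1(\Sigma;\R)$, so $b_1(M)-2=b_1(\Sigma)-2=-\chi'(\Sigma)$ by the displayed formula.

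For the nonorientable case I would invoke Theorem \ref{contact}, giving the vanishing order as $b_1(o(E_s))$, the first de Rham cohomology of $M$ twisted by the orientation line of the stable bundle. Proposition \ref{Gysin} (to be proved later in the paper) identifies this number with $b_1(\Sigma)$, which is again $-\chi'(\Sigma)$ by the displayed formula in the nonorientable row. Assembling the two cases completes the proof.

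The only nontrivial ingredient is the identification in the nonorientable case, and that is packaged into Proposition \ref{Gysin}; within the proof of the corollary itself, the main step is simply recognizing that the Gysin computation in the orientable case and the twisted Gysin computation of Proposition \ref{Gysin} both conspire, together with the case distinction in $\chi'(\Sigma)$, to yield a single uniform expression $-\chi'(\Sigma)$.
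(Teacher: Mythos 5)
Your proposal is correct and follows essentially the same route as the paper: quote the orientable-case result from \cite{dzsurfaces}, quote Theorem~\ref{contact} and Proposition~\ref{Gysin} for the nonorientable case, and observe that the case distinction in the displayed formula for $\chi'(\Sigma)$ collapses both answers to $-\chi'(\Sigma)$. The one thing you make explicit that the paper leaves implicit is the untwisted Gysin argument giving $b_1(S^*\Sigma)=b_1(\Sigma)$ in the orientable case (the mirror image of Proposition~\ref{Gysin}); you might also note, as the paper tacitly uses via Corollary~\ref{cor1} together with injectivity of $\pi^*$ on $H^1(\cdot;\mathbb{Z}/2)$, that $E_s$ is orientable exactly when $\Sigma$ is, so that the correct one of \cite{dzsurfaces} or Theorem~\ref{contact} applies in each branch.
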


\numberwithin{thm}{section}

\subsection{Axiom A Flows}

Let $M$ be a compact manifold without boundary of dimension $n$, and let $(\phi_t)_{t\in\R}$ be a flow on $M$ generated by the vector field $V\in \cinf(M;TM)$.
\begin{defn}
A $\phi_t$-invariant set $K\subseteq M$ is called \emph{hyperbolic} for the flow $(\phi_t)_{t\in \mathbb R}$ if $V$ does not vanish on $K$ and for each $x\in K$ the tangent space $T_xM$ can be written as the direct sum $$T_xM=E_0(x)\oplus E_s(x)\oplus E_u(x)$$ where $E_0(x)=\text{span}(V(x))$, $E_s,E_u$ are continuous $\phi_t$-invariant vector bundles on $K$, and for some Riemannian metric $|\cdot|$, there are $C,\theta>0$ such that for all $t>0$, 
\begin{align}\label{ineqAnosov}
\begin{split}
|d\phi_t(x)v|_{\phi_t(x)}&\le Ce^{-\theta t}|v|_x\qquad v\in E_s(x)\\ |d\phi_{-t}(x)w|_{\phi_{-t}(x)}&\le Ce^{-\theta t}|w|_x\quad\,\,\,\, w\in E_u(x).
\end{split}
\end{align}
In the important case where all of $M$ is hyperbolic, we call $(\phi_t)_{t\in \mathbb R}$ an \emph{Anosov flow}.
\end{defn}

%One example of an Anosov flow is the geodesic flow on a compact negatively curved manifold \cite{Anosov67}. 
%\cite[Theorem 6 and remark afterwards]{dyatlov}.

There is an analogous notion of hyperbolicity at fixed points.
\begin{defn}
A fixed point $x\in M$, i.e., $V(x)=0$, is 
called \emph{hyperbolic} if the differential $DV(x)$ has no eigenvalues with vanishing real part.
\end{defn}

A generalization of Anosov flows is the following, given first by Smale \cite[II.5 Definition 5.1]{smale}:

\begin{defn}\label{aaf}
The flow $(\phi_t)_{t\in\R}$ is called \emph{Axiom A} if 
\begin{enumerate}
%    \item the nonwandering set (as defined in \cite{dgshort}) of $(\phi_t)_{t\in \mathbb R}$ is the union of the set of fixed points $\mathcal{F}$ and the closure $\mathcal{K}$ of the union of all closed orbits,
    \item all fixed points of $(\phi_t)_{t\in \mathbb R}$ are hyperbolic,
    \item the closure $\mathcal{K}$ of the union of all closed orbits of $(\phi_t)_{t\in \mathbb R}$ is hyperbolic,
    \item the nonwandering set (\cite[Definition 2.2]{dgshort}) of $(\phi_t)_{t\in \mathbb R}$ is the disjoint union of the set of fixed points and $\mathcal{K}$. 
\end{enumerate}
\end{defn}

 We now recall the definition of a locally maximal set, given in \cite[Definition 2.4]{dgshort}.

\begin{defn}
A compact $\phi_t$-invariant set $K\subseteq M$ is called \emph{locally maximal} for $(\phi_t)_{t\in \mathbb R}$ if there is a neighborhood $V$ of $K$ such that $$K=\bigcap_{t\in\R}\phi_t(V).$$
\end{defn}

We may then state the key proposition, which generalises \cite[Proposition 3.1]{dgshort} to the case where $E_s$ or $E_u$ is not necessarily orientable on $\mathcal K$.

\begin{prop}\label{mainlem}
Let $K\subseteq M$ be a locally maximal hyperbolic set for $(\phi_t)_{t\in \mathbb R}$, and let $\zeta_K$ be defined as the Ruelle zeta function where we only take the product over trajectories in $K$. Then $\zeta_K$ has a continuation to a meromorphic function on all of $\C$.
\end{prop}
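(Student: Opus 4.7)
The plan is to reduce to the orientable case of \cite[Proposition 3.1]{dgshort} via a finite cover. Since $K$ is locally maximal, the hyperbolic splitting extends continuously to some $\phi_t$-invariant neighborhood $U \supseteq K$; I would then extend the real line bundles $\det E_s$ and $\det E_u$ over $U$ arbitrarily to continuous real line bundles on $M$, and let $\pi: \tilde M \to M$ denote the cover of degree dividing $4$ obtained as the fiber product of their orientation double covers. By construction, $\pi^* E_s$ and $\pi^* E_u$ are orientable over $\tilde K := \pi^{-1}(K)$. The lifted flow $\tilde \phi_t$ retains $\tilde K$ as a locally maximal hyperbolic set, so the orientable case yields the meromorphic continuation of $\zeta_{\tilde K}$ to $\C$.

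To extract $\zeta_K$ from $\zeta_{\tilde K}$, I would analyze how orbits lift. A primitive orbit $\gamma \subset K$ has a monodromy element $\mu_\gamma$ in the deck group $G := \mathrm{Deck}(\tilde M/M)$; if $d$ is its order, then $\pi^{-1}(\gamma)$ consists of $|G|/d$ primitive orbits, each of period $d T_\gamma^{\sharp}$. A short computation using $\prod_{\zeta^d = 1}(1 - \zeta x) = 1 - x^d$ then gives the factorization
\[
\zeta_{\tilde K}(\lambda) = \prod_{\chi \in \widehat{G}} \zeta_K^{\chi}(\lambda), \qquad \zeta_K^{\chi}(\lambda) := \prod_{\gamma} \bigl(1 - \chi(\mu_\gamma)\, e^{i\lambda T_\gamma^{\sharp}}\bigr),
\]
in which $\zeta_K^{\mathrm{triv}} = \zeta_K$ appears as one of the factors.

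Meromorphicity of $\zeta_{\tilde K}$ alone does not force meromorphicity of the individual factors in the above product, so the crux of the argument---and the main technical obstacle---is showing that each twisted factor $\zeta_K^{\chi}$ continues meromorphically. I would achieve this by adapting the proof of \cite[Proposition 3.1]{dgshort} to zeta functions twisted by the flat real line bundle $L_\chi \to M$ corresponding to $\chi$: the anisotropic Sobolev framework, the Guillemin trace formula, and the alternating factorization of the zeta function as $\prod_q \det(I - \mathcal M_\lambda^{q,\chi})^{(-1)^q}$ into Fredholm determinants on sections of $L_\chi \otimes \Lambda^q E_u^*$ all go through verbatim, since twisting by a rank-one flat bundle does not affect any of the relevant microlocal properties. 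Equivalently, one identifies $\mathcal M_\lambda^{q,\chi}$ with the $\chi$-isotypical component of the corresponding operator on $\tilde M$, whose Fredholm determinant we have already shown to be meromorphic. Either viewpoint closes the argument.
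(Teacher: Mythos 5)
Your reduction to flat-line-bundle twisting is the right idea, but it is taken by a longer route than necessary and the key factorization identity as written is off by a sign character. The paper proceeds directly: it twists $\mathcal{E}_0^k$ by the orientation line bundle $o(E_s)$, derives the Guillemin trace formula with the extra numerator factor $\sgn\bigl(\det\mathcal{P}_\gamma|_{E_s}\bigr)$, and then uses the algebraic identity
\[
|\det(I-\mathcal{P}_\gamma)|=(-1)^{\dim E_s}\,\sgn\bigl(\det\mathcal{P}_\gamma|_{E_s}\bigr)\det(I-\mathcal{P}_\gamma)
\]
to collapse the alternating product of the resulting determinants to $\zeta_K$. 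No cover and no enumeration over characters are needed.

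Your asserted identity $\zeta_K^\chi=\prod_q\det(I-\mathcal{M}_\lambda^{q,\chi})^{(-1)^q}$ does \emph{not} go through verbatim. Expanding the logarithms, the $L_\chi$-twisted alternating product produces the coefficient $\chi(\mu_\gamma)\sgn(\det(I-\mathcal{P}_\gamma))$, not $\chi(\mu_\gamma)$, and the sign $\sgn(\det(I-\mathcal{P}_\gamma))$ is precisely the obstruction that nonorientability introduces; twisting by an arbitrary rank-one flat bundle does \emph{not} remove it. That sign equals (up to the fixed factor $(-1)^{\dim E_s}$) $\chi_s(\mu_\gamma)$, where $\chi_s$ is the character of the deck group corresponding to $o(E_s)$, so what the $L_\chi$-twisted Fredholm determinant actually computes is $\zeta_K^{\chi\cdot\chi_s}$. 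Since multiplication by $\chi_s$ permutes the characters, your conclusion survives after relabeling---but then $\zeta_K=\zeta_K^{\mathrm{triv}}$ arises exactly from the twist $L_{\chi_s}=o(E_s)$, and the cover, fiber product, and product over characters are all redundant: you have reconstructed the paper's argument. The concrete fix is to not claim the determinant factorization goes through for an arbitrary flat twist; identify the specific twist that cures the sign (namely $o(E_s)$), and the rest of the scaffolding can be dropped.
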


Theorem \ref{main} follows from Proposition \ref{mainlem}, as we may remark that by \cite[II.5 Theorem 5.2]{smale} we can write $\mathcal{K}=K_1\sqcup\dots\sqcup K_N$ with $K_j$ basic hyperbolic\footnote{These are locally maximal hyperbolic by definition (see \cite[Definition 2.5]{dgshort}).}.  Then the product $$\zeta_R(\lambda)=\prod_{j=1}^N\zeta_{K_j}(\lambda),$$ which a priori holds for $\im(\lambda)\gg1$, gives that $\zeta_R$ also has a meromorphic continuation to all of $\C$.

The goal of Section \ref{Sprop4} is to prove Proposition \ref{mainlem}.

\subsection{The Orientation Bundle}
To fix notation we recall the definition of transition functions of a vector bundle. Given a continuous real vector bundle $E$ of rank $k$ over a manifold $M$ with projection map $\pi$, let $U_{\alpha}, U_{\beta}\subseteq M$ be two small open sets with nonempty intersection, and let $\psi_{\alpha}:\pi^{-1}U_{\alpha}\to U_{\alpha}\times\R^n$, $\psi_{\beta}:\pi^{-1}U_{\beta}\to U_{\beta}\times\R^n$ be local trivializations. Then the map $\psi_{\alpha}\circ\psi_{\beta}^{-1}:(U_{\alpha}\cap U_{\beta})\times\R^n\to(U_{\alpha}\cap U_{\beta})\times\R^n$ is of the form $$\psi_{\alpha}\circ\psi_{\beta}^{-1}(p,v)=(p,\tau_{\alpha\beta}(p)v)$$ where $\tau_{\alpha\beta}\in C^0(U_{\alpha}\cap U_{\beta}, {\rm GL}_k(\R))$ is called a \emph{transition function}. If the local trivializations can be chosen such that $\tau_{\alpha\beta}$ are smooth, then $E$ is a smooth vector bundle. Similarly, if $\tau_{\alpha\beta}$ can be chosen to be locally constant functions, then $E$ is a flat vector bundle.

%Wrtie \tau_alpha\beta in different war
Furthermore, suppose we are given an open cover $(U_{\alpha})_{\alpha\in A}$ of $M$ together with a set of continuous (resp.~smooth, resp.~locally constant) ${\rm GL}_k(\R)$-valued functions $(\tau_{\alpha\beta})_{\substack{\alpha,\beta\in A\\ U_{\alpha}\cap U_{\beta}\neq\emptyset}}$ with $\tau_{\alpha\alpha}=I$ on $U_\alpha$. Then there exists a continuous (resp. smooth, resp. flat) vector bundle $E$ 
%with trivializations $\psi_{\alpha}:\pi^{-1}U_{\alpha}\to U_{\alpha}\times\R^n$ and
with transition functions $\tau_{\alpha\beta}$, provided the following \emph{triple product property} holds: $$\tau_{\alpha\beta}(p)\tau_{\beta\gamma}(p)\tau_{\gamma\alpha}(p)=I $$ for any $p\in U_{\alpha}\cap U_{\beta}\cap U_{\gamma}.$ 

%If the $\tau_{\alpha\beta}$ are smooth, then $E$ is a smooth vector bundle. If the local trivializations can be chosen such that $\tau_{\alpha\beta}$ are locally constant functions, then $E$ is a flat vector bundle. 

\begin{defn}\label{ob}
If $E$ is a continuous (but not necessarily smooth) real vector bundle over $M$ with transition functions $\tau_{\alpha\beta}$, the \emph{orientation bundle} of $E$ is a smooth flat line bundle $\mathscr{O}(E)$ with transition functions $$\sigma_{\alpha\beta}(p)={\rm sgn} \det( \tau_{\alpha\beta}(p))=\begin{cases}1&\det(\tau_{\alpha\beta}(p))>0\\-1&\det(\tau_{\alpha\beta}(p))<0.\end{cases}$$
\end{defn}

%Remark that as $E$ is continuous, $\sigma_{\alpha\beta}$ is well-defined and smooth, and defines a vector bundle by the triple product property.

Recall that if $f:M\to M$ is a map, we say $f$ \emph{lifts} to a  bundle map $F:E\to E$ if $\pi\circ F=f\circ\pi$.

Since $\mathscr{O}(E)$ is a flat vector bundle, using the associated flat connection, we can lift the flow $(\phi_t)_{t\in\R}$ to  a flow $(\widetilde{\Phi}_t)_{t\in\R}$ on $\mathscr{O}(E)$. If the flow $(\phi_t)_{t\in\R}$ on $M$ lifts to a flow $(\Phi_t)_{t\in\R}$ on $E$, if $\psi, \eta$ are distinct trivializations of $E$ near $p$, $\phi_t(p)$ respectively, and $\widetilde{\psi}, \widetilde{\eta}$ are trivializations of $\mathscr{O}(E)$ near $p$, $\phi_t(p)$ respectively, we have for $p\in M$ and  $l\in\mathscr{O}(E)_p$:
\begin{align}
    \label{eqwildephi}
\widetilde{\Phi}_t(l)=\widetilde{\eta}^{-1}\left(\phi_t(p),\sgn\left(\det\left.\left(\eta\Phi_t\psi^{-1}\right)\right|_p\right)\proj_2\widetilde{\psi}(l)\right),
\end{align}
where $\proj_2$ is the obvious projection to the second component.

\subsection{Geodesic flows}
Let $Z$ be a  negatively  curved closed Riemannian  manifold. Let 
$M=S^{*}Z$ be the cosphere bundle on $Z$. It is classical that the geodesic 
flow on $M$ is Anosov \cite{Anosov67}. 

Let $\pi: M\to Z$ be the canonical projection. For $x\in M$, we have a morphism of linear spaces 
	\begin{align}\label{eqTMtoZ0}
	    		\pi_{*}: T_{x}M\to T_{\pi(x)}Z. 
	\end{align}

The following proposition is classical \cite[Section 22]{Anosov67} and \cite[Proposition 6]{Klingenberg}. We include a proof for the sake of completeness. 

\begin{prop}\label{propEsTZ}
The morphism $\pi_{*}$ induces an isomorphism  of continuous 
vector bundles on $M$, 
	\begin{align}\label{eqTMtoZ}
		E_{s}\oplus  E_0\simeq \pi^{*}(TZ). 
	\end{align} 
\end{prop}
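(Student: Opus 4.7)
The plan is to verify that $\pi_{*}|_{E_{s}\oplus E_{0}}$ is a fiberwise isomorphism at each $x\in M$ and then observe that continuity follows from continuity of $E_{s}$ and $E_{0}$ as subbundles of $TM$. Since $\dim M=2\dim Z-1$ while $\dim E_{0}=1$ and $\dim E_{s}=\dim E_{u}=\dim Z-1$, the total rank of $E_{s}\oplus E_{0}$ is $\dim Z$, matching $\pi^{*}(TZ)$. It therefore suffices to show fiberwise injectivity of $\pi_{*}$ on $E_{s}\oplus E_{0}$.

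First I would compute $\pi_{*}(E_{0}(x))$ at $x=(z,\xi)\in S^{*}Z$. Under the natural horizontal/vertical splitting of $T_{x}M$ induced by the Levi-Civita connection, the geodesic generator $V(x)$ is purely horizontal and equals the horizontal lift of the metrically dual vector $\xi^{\sharp}\in T_{z}Z$. Since $x$ lies in the cosphere bundle, $\xi^{\sharp}\neq 0$, so $\pi_{*}|_{E_{0}(x)}$ is injective with image $\mathrm{span}(\xi^{\sharp})\subseteq T_{z}Z$.

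Next I would describe $E_{s}(x)$ via stable Jacobi fields. The classical fact on a negatively curved closed manifold is that for each $w\in T_{z}Z$ with $w\perp\xi^{\sharp}$, there is a unique Jacobi field $J$ along the geodesic $\gamma(t)=\pi(\phi_{t}(x))$ satisfying $J(0)=w$ and $|J(t)|\to 0$ as $t\to+\infty$; under the standard identification of $T_{x}M$ with pairs $(J(0),J'(0))$ of Jacobi data, this realizes $E_{s}(x)$ as the graph of a linear map $(\xi^{\sharp})^{\perp}\to V_{x}$, where $V_{x}=\ker\pi_{*}$ is the vertical subspace. In particular $\pi_{*}|_{E_{s}(x)}$ is injective with image $(\xi^{\sharp})^{\perp}\subseteq T_{z}Z$.

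Putting the two images together, $\pi_{*}(E_{s}(x)\oplus E_{0}(x))=(\xi^{\sharp})^{\perp}\oplus\mathrm{span}(\xi^{\sharp})=T_{z}Z$, so by the dimension match $\pi_{*}|_{E_{s}(x)\oplus E_{0}(x)}$ is an isomorphism onto the fiber of $\pi^{*}(TZ)$ over $x$. These fiberwise isomorphisms assemble into the bundle map \eqref{eqTMtoZ}, and continuity of $E_{s}$ and $E_{0}$ as continuous subbundles of $TM$ promotes it to an isomorphism of continuous vector bundles. The main technical point, and the place where the negative curvature assumption is essential, is the identification of $E_{s}$ with the graph of stable Jacobi data giving fiberwise injectivity of $\pi_{*}$; this is classical, but I would take care to pin down the sign conventions relating $(J(0),J'(0))$ to the horizontal/vertical splitting of $T_{x}M$.
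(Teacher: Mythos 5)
Your proposal is correct and follows essentially the same approach as the paper: both reduce to fiberwise injectivity via the dimension count, identify $T_xM$ with Jacobi data under the horizontal/vertical splitting, and deduce injectivity of $\pi_*$ from the uniqueness of the stable Jacobi field with prescribed initial value. The only stylistic difference is that you treat $E_0$ and $E_s$ separately (horizontal lift of $\xi^\sharp$ for $E_0$, decaying perpendicular Jacobi fields for $E_s$), whereas the paper handles $E_s\oplus E_0$ in one stroke by invoking that $w\in E_s(x)\oplus E_0(x)$ if and only if $J_{x,w}$ is bounded, together with Eberlein's existence/uniqueness of the bounded Jacobi field with given $J(0)$; this avoids having to verify separately that $\pi_*(E_s(x))$ lands in $(\xi^\sharp)^\perp$ and that the two images are transverse.
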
 
\begin{proof}
Since both sides of \eqref{eqTMtoZ} have the same dimension, it is enough 
to show that $\pi_{*}|_{E_s \oplus E_0}$ is injective. We will  show this  using Jacobi fields. It is convenient to work on the sphere bundle $M'=SZ$. We identify  $M'$ 
	with $M$ via the Riemannian metric on $Z$. 	
	
We follow \cite[Section II.H]{Eberlein01}. 
Let $\cM$ be the total space of $TZ$. Denote still by $\pi:\cM\to Z$  the obvious projection. 
Let $T^{V}\cM\subset T\cM$ be 
the vertical subbundle of $T\cM$. The Levi-Civita connection on $TZ$ 
induces a horizontal subbundle $T^{H}\cM\subset T\cM$ of $T\cM$, so 
that 
	\begin{align}\label{eqTMHV}
	T\cM=	T^{V}\cM\oplus  T^{H}\cM. 
	\end{align} 
Since $T^{V}\cM\simeq \pi^{*}(TZ)$ and  $T^{H}\cM\simeq 
\pi^{*}(TZ)$, by \eqref{eqTMHV}, we can identify the smooth vector bundles,  
	\begin{align*}
	T\cM= \pi^{*}(TZ\oplus TZ). 
	\end{align*} 
	
For $x={(z,v)}\in \cM$, let $\gamma_{x}$ be the unique  geodesic  on $Z$ such that 
$(\gamma_{x}(0),\dot{\gamma}_{x}(0))=(z,v). $
For  $w\in T_{x}\cM$, let $J_{x,w}\in 
C^{\infty}(\gamma_{x},TZ|_{\gamma_{x}})$ be the unique Jacobi field along 
$\gamma_{x}$ such that 
$	\left(J_{x,w}(0),\dot{J}_{x,w}(0)\right)=w,$
where $\dot{J}_{x,w}$ is the covariant derivation of $J_{x,w}$ in the 
direction $\dot \gamma_x$. Recall that a Jacobi field $J$ is called stable, if there is $C>0$ such that for all $t\ge 0$, 
	\begin{align*}
		\left|J(t)\right|\le C. 
	\end{align*} 
By \cite[Proposition VI.A]{Eberlein01}, given $x\in \mathcal M$, for any $Y_{1}\in T_{z}Z$, there exists one and only one stable Jacobi 
field $J$ along $\gamma_x$ such that $J(0)=Y_{1}$. 

For $x=(z,v)\in M'$, we have
	\begin{align*}
		T_{x}M'=\{(Y_{1},Y_{2})\in T_{z}Z\oplus T_{z}Z: \langle Y_{2},v\rangle =0\}. 
	\end{align*} 
The morphism $\pi_{*}$ in \eqref{eqTMtoZ0} is just 
	\begin{align*}
		w\in T_{x}M'\to J_{x,w}(0)\in T_{z}Z. 
	\end{align*} 
By \cite[Proposition VI.B]{Eberlein01},  $w\in E_{s}(x)\oplus E_0(x)$ if and only if the  Jacobi fields 
$J_{x,w}$ is stable. By the uniqueness of stable Jacobi fields, we see that $\pi_*|_{E_s\oplus E_0}$ is injective. 
\end{proof} 

Since $E_0$ is a trivial line bundle, our proposition implies immediately:

\begin{cor}\label{cor1}
	We have the isomorphism of smooth flat line bundles 
	\begin{align*}
		\mathscr{O}(E_{s})\simeq \pi^{*}(\mathscr{O}(TZ)). 
	\end{align*} 
\end{cor}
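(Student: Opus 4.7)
The plan is to deduce Corollary \ref{cor1} from Proposition \ref{propEsTZ} using three functorial properties of the orientation line bundle: it sends isomorphic continuous bundles to isomorphic flat line bundles, it converts direct sums to tensor products, and it commutes with pullback.

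First, I would record the key identity $o(E \oplus F) \simeq o(E) \otimes o(F)$, which is immediate from Definition \ref{ob}. Choosing an open cover $\{U_\alpha\}$ that trivializes both $E$ and $F$, the bundle $E \oplus F$ has transition functions $\tau^E_{\alpha\beta} \oplus \tau^F_{\alpha\beta}$, and $\det(A \oplus B) = \det A \cdot \det B$ yields
\[
\sgn\det\bigl(\tau^E_{\alpha\beta} \oplus \tau^F_{\alpha\beta}\bigr) = \sgn\det(\tau^E_{\alpha\beta}) \cdot \sgn\det(\tau^F_{\alpha\beta}),
\]
which are precisely the transition functions of $o(E) \otimes o(F)$ as a flat line bundle. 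Similarly, $o(\pi^*E) \simeq \pi^*(o(E))$, since $\tau_{\alpha\beta} \circ \pi$ are transition functions for $\pi^*E$ and $\sgn\det$ commutes with composition by $\pi$.

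Applying these two identities to Proposition \ref{propEsTZ}, I obtain
\[
o(E_s) \otimes o(E_0) \simeq o(E_s \oplus E_0) \simeq o(\pi^*(TZ)) \simeq \pi^*(o(TZ)).
\]
Since $E_0 = \mathrm{span}(V)$ is globally trivialized by the nowhere-vanishing flow generator $V$, the line bundle $o(E_0)$ is canonically trivial, and we may cancel it to conclude $o(E_s) \simeq \pi^*(o(TZ))$.

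The main subtlety I anticipate is the regularity mismatch: Proposition \ref{propEsTZ} yields only a \emph{continuous} isomorphism of vector bundles, whereas the corollary asserts an isomorphism of \emph{smooth flat} line bundles. This is benign, however, because $\sgn\det$ takes values in the discrete set $\{\pm 1\}$, so any continuous vector bundle isomorphism induces a locally constant relation between the sign transition data, and hence a smooth isomorphism that intertwines the canonical flat connections on the orientation bundles. This is precisely the mechanism by which Definition \ref{ob} manufactures a smooth flat bundle from a merely continuous input, so no additional argument is required.
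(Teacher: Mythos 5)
Your argument is correct and is essentially the same as the paper's, which simply observes that Proposition \ref{propEsTZ} together with the triviality of $E_0$ immediately gives the result. You have merely spelled out the three functorial facts about $o(\cdot)$ (compatibility with direct sums, pullbacks, and continuous isomorphisms, the last via local constancy of $\sgn\det$) that the paper leaves implicit, and your handling of the continuous-to-smooth-flat regularity upgrade is precisely the mechanism intended by Definition \ref{ob}.
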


\section{Proof of Proposition \ref{mainlem}}\label{Sprop4}
We use the notation of \cite{dz}. If $0\le k\le n-1$,  let $\mathcal{E}_0^k\subset \Lambda^k(T^*M)$ denote the subbundle of $k$-forms $\omega$ such that $\iota_V\omega=0$, where $\iota$ denotes interior multiplication.

%Introduce terminology from D/Z

Let $\widetilde{\mathcal{E}_0^k}=\mathcal{E}_0^k\otimes \mathscr{O}(E_s).$ We consider the pullback  $\phi_{-t}^*$ on sections of $\widetilde{\mathcal{E}_0^k}$. Note that the flow $(\phi_t)_{t\in\mathbb R}$ lifts to a flow $(\Phi_t)_{t\in\mathbb R}$ on $\mathcal{E}_0^k$. Indeed, for $p\in M$, $\omega\in \mathcal E_{0,p}^k$, $\Phi_t \omega\in \mathcal E_{0,\phi_t(p)}^k$ is defined for $v_1,\cdots, v_k\in T_{\phi_t(p)}M$ by 
\begin{align}\label{eqPhiPoin}
\Phi_t\omega(v_1,\dots,v_k)=\omega \left((d\phi_t|_p)^{-1}v_1,\cdots,(d\phi_t|_{p})^{-1}v_k\right).
\end{align}Note that from the above formula, it is easy to check that $\iota_V\Phi_t\omega=0$.
Recall also that the flow $(\phi_t)_{t\in\mathbb R}$ lifts to a flow  $\widetilde{\Phi}_t$ on $\mathscr{O}(E_s)$ (see \eqref{eqwildephi}). For a section $s$ in $\widetilde{\mathcal E_0^k}$, we have 
\begin{align}\label{eqpullb}
    \phi_{-t}^*s\,(p)=\left(\Phi_t\otimes \widetilde{\Phi}_t\right) \big(s(\phi_{-t}(p))\big). 
\end{align}

%On $\mathcal{E}_0^k$ it acts by the usual pullback on $k$-forms. Specifically there is a lift $(\Phi_t)_{t\in\mathbb R}$ of $(\phi_{t})_{t\in\mathbb R}$ to the vector bundle $\mathcal{E}_0^k$. For $p\in M$, $w\in \mathcal E_{0,p}^k$, $\Phi_t w\in \mathcal E_{0,\phi_t(p)}^k$ is defined for $v_1,\cdots, v_k\in T_{\phi_t(p)}M$ by 
%\begin{align*}
%\Phi_t\omega(v_1,\dots,v_k)=\omega \left((d\phi_t|_p)^{-1}v_1,\cdots,(d\phi_t|_{p})^{-1}v_k\right).
%\end{align*}
%Then if $\omega$ is a section of $\mathcal{E}_0^k$, we have 
%$$
%\big(\phi_{-t}^*\omega\big)\big|_p= \Phi_t\Big( \omega_{\phi_{-t}(p)}\Big). 
%$$
%Note that from the above formula, it is easy to %check that $\iota_V\phi_{-t}^*\omega=0$.

%$\phi_t^*$ lifts to the map that gives $$\phi_t^*\omega_p(v_1,\dots v_n)=\omega_{\phi_t(p)}(d\phi_tv_1,\dots, d\phi_t v_n)$$ so $\iota_V\phi_t^*\omega=0$.

%On $\mathscr{O}(E_s)$, $\phi_{-t}$ induces a pullback on sections $s$ by $\phi_{-t}^*s(p)=\widetilde{\Phi}_t\big(s(\phi_{-t}(p))\big)$, where $\widetilde{\Phi}_t$ is defined in \eqref{eqwildephi}. 

%CIte D/Z for GTF as well

Let $\chi\in C^\infty(M)$ be a smooth function whose support is contained in a small neighborhood of $K$ such that $\chi(x)=1$ for all $x\in K$. We now invoke the Guillemin trace formula (see \cite[pp.\ 501-502]{guillemin}, \cite[Appendix B]{dz}, \cite[(4.6)]{dglong})
which says that the flat trace $\tr^{\flat}\left.\chi\phi_{-t}^*\chi\right|_{C^{\infty}\left(M;\widetilde{\mathcal{E}_0^k}\right)}$ is a distribution on $(0,\infty)$ given by 
\begin{align}
    \label{eqGuillemin}
\tr^{\flat}\left.\chi\phi_{-t}^*\chi\right|_{C^{\infty}\left(M;\widetilde{\mathcal{E}_0^k}\right)}=\sum_{\gamma\subset K}\frac{T_{\gamma}^{\sharp}\ \tr^{\widetilde{\mathcal{E}}_{0,y}^k}\left(\Phi_{T_{\gamma}}\otimes\widetilde{\Phi}_{T_{\gamma}}\right)}{|\det(I-\mathcal{P}_{\gamma})|}\delta_{t-T_{\gamma}},
\end{align}
where the sum is taken over all the periodic trajectories $\gamma$ in $K$ with period $T_\gamma$ and primitive period $T_{\gamma}^{\sharp}$, $y$ is any point on $\gamma$, and $\mathcal{P}_{\gamma}=d\phi_{-T_{\gamma}}|_{(E_s\oplus E_u)_y}$ is the linearized Poincar\'e map at $y$. Note that as trace and determinant are invariant under conjugation, the right hand side does not depend on $y$.
%Check to make sure directions are right

By \eqref{eqPhiPoin},  the trace of  $\Phi_{T_{\gamma}}$ on $\mathcal{E}_{0,y}^k$ is just $\tr\left(\bigwedge^k\mathcal{P}_{\gamma}\right)$. By \eqref{eqwildephi}, we may take trivializations $\psi, \widetilde{\psi}$ of $E_s$, $\mathscr{O}(E_s)$ in a neighborhood of $y$ and have the induced lifting  on $\mathscr{O}(E_s)$ to be $\sgn\left(\det\left.\left(\psi d\phi_{T_{\gamma}}|_{E_{s,y}}\psi^{-1}\right)\right|\right).$ By definition we get this to be equal to $$\sgn\left(\det\left. d\phi_{T_{\gamma}}\right|_{E_{s,y}}\right)=\sgn\left(\det\left. d\phi_{-T_{\gamma}}\right|_{E_{s,y}}\right)=\sgn\det\left(\left. \mathcal{P}_\gamma\right|_{E_s}\right),$$ and as it is a map between one dimensional spaces, the trace is given by that expression as well. %Again we may remark that we have independence of $y$ on $\gamma$ by conjugation as before.
By the above consideration, we can rewrite \eqref{eqGuillemin} as 
\begin{align}\label{eqGuillemin1}
    \tr^{\flat}\left.\chi\phi_{-t}^*\chi\right|_{C^{\infty}\left(M;\widetilde{\mathcal{E}_0^k}\right)}=\sum_{\gamma\subset K}\frac{T_{\gamma}^{\sharp}\ \tr(\bigwedge^k\mathcal P_\gamma)\sgn(\det \mathcal{ P}_\gamma|_{E_s})}{|\det(I-\mathcal{P}_{\gamma})|}\delta_{t-T_{\gamma}}.
\end{align}

Let us follow \cite[Section 3]{dgshort}. By \cite[Lemma 3.2]{dgshort}, we may and we will assume that near $K$, $(\phi_t)_{t\in \mathbb R}$
is an open hyperbolic system in the sense of \cite[Assumptions (A1)–(A4)]{dglong}. By \cite[Lemma 1.17]{dglong}, there is $C>0$ such that for all $t\ge 0$, 
\begin{align}
    \label{eqexp}
    |\{\gamma \text{ closed trajectory in } K :T_\gamma\le t\}|\le Ce^{Ct}. 
\end{align}
For ${\rm Im}(\lambda)\gg1$ big enough, set 
\begin{align}\label{eqzetak}
    \zeta_{K,k}(\lambda)=\exp\left(-\sum_{\gamma\subset K}\frac{T_{\gamma}^{\sharp}}{T_{\gamma}} \frac{\tr(\bigwedge^k\mathcal P_\gamma)\sgn(\det \mathcal{ P}_\gamma|_{E_s})}{|\det(I-\mathcal{P}_{\gamma})|} e^{i\lambda T_{\gamma}} \right). 
\end{align}

\begin{lem}\label{Lem21}
For ${\rm Im}(\lambda)\gg1$ big enough, we have 
\begin{align}\label{eqGui}
    \dd_{\lambda}\log\zeta_{K,k}(\lambda)=-i\int_0^{\infty}e^{i\lambda t}\tr^{\flat}\left.\chi\phi_{-t}^*\chi\right|_{C^{\infty}\left(M;\widetilde{\mathcal{E}_0^k}\right)}dt.
\end{align}
The function $\zeta_{K,k}(\lambda)$ has a holomorphic extension to $\mathbb C$. 
\end{lem}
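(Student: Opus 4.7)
The plan is to establish the two assertions in succession. For \eqref{eqGui}, I would first differentiate \eqref{eqzetak} termwise to obtain
\begin{align*}
\dd_\lambda\log\zeta_{K,k}(\lambda)=-i\sum_{\gamma\subset K}T_\gamma^\sharp\,\frac{\tr(\bigwedge^k\mathcal P_\gamma)\,\sgn(\det\mathcal P_\gamma|_{E_s})}{|\det(I-\mathcal P_\gamma)|}\,e^{i\lambda T_\gamma},
\end{align*}
absolute convergence for $\im(\lambda)\gg 1$ being guaranteed by the orbit-count bound \eqref{eqexp} combined with the standard hyperbolicity estimate that $\tr(\bigwedge^k\mathcal P_\gamma)/|\det(I-\mathcal P_\gamma)|$ stays uniformly bounded in $\gamma$. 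The same bound justifies pairing the distributional Guillemin identity \eqref{eqGuillemin1} against the test function $e^{i\lambda t}$ on $(0,\infty)$, which reproduces the series above without the factor $-i$ in front; after multiplying by $-i$ one arrives at \eqref{eqGui} on $\{\im(\lambda)\gg 1\}$.

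For the holomorphic extension, I would reinterpret the right side of \eqref{eqGui} as the flat trace of a resolvent. Let $P_k=-i\mathcal{L}_V$ act on $C^\infty(M;\widetilde{\mathcal E_0^k})$ with the Lie derivative lifted via \eqref{eqpullb}, and set $R_k(\lambda)=(P_k-\lambda)^{-1}$. A formal computation (valid for $\im(\lambda)\gg 1$) gives $R_k(\lambda)=-i\int_0^\infty e^{i\lambda t}\phi_{-t}^*\,dt$, so the right side of \eqref{eqGui} equals $\tr^\flat\bigl(\chi R_k(\lambda)\chi\bigr)$. Since $o(E_s)$ is smooth, flat, and rank one, the lift $\widetilde\Phi_t$ acts by $\pm1$ in any parallel local frame, so $P_k$ has the same principal symbol as its untwisted analogue and twisting commutes microlocally with all the pseudodifferential constructions of \cite{dglong}. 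The anisotropic-Sobolev-space framework of \cite{dglong} under Assumptions (A1)--(A4) then applies verbatim to the twisted bundle and yields a meromorphic continuation of $\chi R_k(\lambda)\chi$, and hence of $\dd_\lambda\log\zeta_{K,k}$, to all of $\mathbb C$.

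To promote this meromorphic extension of $\dd_\lambda\log\zeta_{K,k}$ to holomorphicity of $\zeta_{K,k}$, I would identify the residues at the poles of $\tr^\flat(\chi R_k(\lambda)\chi)$ with dimensions of generalized resonant subspaces exactly as in \cite[Section 3]{dgshort}. Since these dimensions are non-negative integers, every pole of $\dd_\lambda\log\zeta_{K,k}$ is simple with non-negative integer residue, and exponentiation converts it into a zero of finite order, producing the desired entire function. The main obstacle is verifying that every microlocal step of \cite{dglong}---the escape-function construction, the Fredholm theory on anisotropic spaces, and the identification of flat-trace residues with spectral-projector ranks---survives the twist by $o(E_s)$. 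Because $o(E_s)$ is flat and one-dimensional, this verification is essentially local: on any contractible chart the twist trivializes and the whole argument reduces to the orientable case, which is precisely why the extension is expected to go through without substantive modification.
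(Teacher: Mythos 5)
Your proposal takes essentially the same route as the paper: differentiate $\log\zeta_{K,k}$ term by term, match the resulting Dirichlet series against the Guillemin trace formula \eqref{eqGuillemin1} (with convergence from \eqref{eqexp}), and then invoke the microlocal resolvent machinery of \cite[Section 4]{dglong}---which indeed survives the twist by the flat rank-one bundle $o(E_s)$ for the local reasons you give---to continue the flat trace and identify pole residues with ranks of the finite-rank spectral projectors $\Pi_k$. One sign to straighten out: with $P_k=-i\mathcal L_V$ one has $\phi_{-t}^*=e^{-itP_k}$, so $R_k(\lambda)=(P_k-\lambda)^{-1}=+i\int_0^\infty e^{i\lambda t}\phi_{-t}^*\,dt$ (as the paper later uses), not $-i\int_0^\infty$; consequently the right side of \eqref{eqGui} is $-\tr^\flat\!\left(\chi R_k(\lambda)\chi\right)$, and the Laurent expansion of $R_k$ contributes $-\Pi_k/(\lambda-\lambda_0)$, so it is the product of these two minus signs that makes the residue of $\dd_\lambda\log\zeta_{K,k}$ equal $+m_k(\lambda_0)\ge 0$ and hence makes $\zeta_{K,k}$ holomorphic rather than merely meromorphic. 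With that correction your argument is the one the paper intends by its citation of \cite[Section 4]{dglong}.
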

\begin{proof}Let us first remark that by \eqref{eqGuillemin1} and \eqref{eqexp}, the right hand side of \eqref{eqGui} is well defined.  Taking a logarithm and differentiating \eqref{eqzetak} and using Guillemin trace formula \eqref{eqGuillemin1}, we get \eqref{eqGui}.  The last part of 
the lemma follows from the arguments of \cite[Section 4]{dglong}. 
\end{proof}

Recall that for ${\rm Im}(\lambda)\gg1$ big enough, we have 
\begin{align}\label{eqzetaKK}
    \zeta_K(\lambda)=\prod_{\gamma^\sharp\subset K}\left(1-e^{i\lambda T_\lambda^\sharp}\right)=\exp\left(-\sum_{\gamma\subset K}\frac{T_{\gamma}^{\sharp}}{T_{\gamma}} e^{i\lambda T_{\gamma}} \right). 
\end{align}
%using the arguments of in \cite[Section 5]{dglong} as well as Lemma \ref{Lem21}, we  finish the proof of 
Proposition \ref{mainlem} is a consequence of the following lemma. This lemma was stated in \cite{bt}, but we restate and prove it for convenience.
\begin{lem}\label{lemdet}
The following identity of meromorphic functions on $\mathbb C$ holds,
\begin{align}\label{eqZetakZetak}
    \zeta_K(\lambda)=\prod_{k=0}^{n-1}\big(\zeta_{K,k}(\lambda)\big)^{(-1)^{k+\dim E_s}}. 
\end{align}
\end{lem}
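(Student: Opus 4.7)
The plan is to prove the identity in the half-plane $\im(\lambda)\gg 1$ where both sides are given by convergent exponential sums, and then invoke meromorphic continuation (Lemma \ref{Lem21} for $\zeta_{K,k}$, and \eqref{eqzetaKK} combined with uniqueness of meromorphic extensions) to conclude the identity on all of $\C$. For $\im(\lambda)\gg 1$, none of $\zeta_K$ and $\zeta_{K,k}$ vanish, so taking logarithms of \eqref{eqzetaKK} and the right hand side of \eqref{eqZetakZetak}, and matching coefficients of $e^{i\lambda T_{\gamma}}$ indexed by closed orbits $\gamma\subset K$, the lemma reduces to the purely algebraic statement that for each such $\gamma$,
\begin{equation}\label{eqalg}
|\det(I-\mathcal{P}_\gamma)| \;=\; (-1)^{\dim E_s}\,\sgn\!\big(\det \mathcal{P}_\gamma|_{E_s}\big)\,\sum_{k=0}^{n-1}(-1)^{k}\tr\!\Big(\textstyle\bigwedge^k\mathcal{P}_\gamma\Big).
\end{equation}

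The standard linear-algebraic identity $\det(I-A)=\sum_{k}(-1)^k\tr(\bigwedge^k A)$ applied to $A=\mathcal{P}_\gamma$ acting on the $(n-1)$-dimensional space $E_s\oplus E_u$ rewrites the right hand side of \eqref{eqalg} as $(-1)^{\dim E_s}\sgn(\det \mathcal{P}_\gamma|_{E_s})\det(I-\mathcal{P}_\gamma)$. Thus \eqref{eqalg} is equivalent to the sign identity
\begin{equation}\label{eqsign}
\sgn\det(I-\mathcal{P}_\gamma) \;=\; (-1)^{\dim E_s}\,\sgn\det\big(\mathcal{P}_\gamma|_{E_s}\big).
\end{equation}

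The main obstacle is verifying \eqref{eqsign}, which is the place where the hyperbolic structure enters. Since $\mathcal{P}_\gamma$ preserves the splitting $E_s\oplus E_u$, the determinant factors as $\det(I-\mathcal{P}_\gamma|_{E_s})\cdot\det(I-\mathcal{P}_\gamma|_{E_u})$. The convention $\mathcal{P}_\gamma=d\phi_{-T_\gamma}|_{E_s\oplus E_u}$ together with \eqref{ineqAnosov} means that $\mathcal{P}_\gamma|_{E_u}$ has all eigenvalues of modulus strictly less than $1$, while $\mathcal{P}_\gamma|_{E_s}$ has all eigenvalues of modulus strictly greater than $1$. For any real linear map with spectral radius $<1$, the determinant $\det(I-\cdot)$ is positive: real eigenvalues lie in $(-1,1)$ and contribute positive factors $1-\lambda$, while non-real eigenvalues come in conjugate pairs contributing $|1-\lambda|^2>0$. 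Hence $\det(I-\mathcal{P}_\gamma|_{E_u})>0$, and applying this to $\mathcal{P}_\gamma^{-1}|_{E_s}$ gives $\det(I-\mathcal{P}_\gamma^{-1}|_{E_s})>0$. The factorization $I-\mathcal{P}_\gamma|_{E_s}=-\mathcal{P}_\gamma|_{E_s}(I-\mathcal{P}_\gamma^{-1}|_{E_s})$ then yields
\[
\sgn\det(I-\mathcal{P}_\gamma|_{E_s})=(-1)^{\dim E_s}\sgn\det(\mathcal{P}_\gamma|_{E_s}),
\]
which combined with the positivity on $E_u$ gives exactly \eqref{eqsign}, completing the proof.
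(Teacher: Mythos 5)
Your proposal is correct and takes essentially the same route as the paper: reduce, via the identity $\det(I-A)=\sum_k(-1)^k\tr\bigl(\bigwedge^k A\bigr)$ on the $(n-1)$-dimensional space $E_s\oplus E_u$, to the per-orbit sign identity $|\det(I-\mathcal{P}_\gamma)|=(-1)^{\dim E_s}\sgn\det(\mathcal{P}_\gamma|_{E_s})\det(I-\mathcal{P}_\gamma)$, and then establish it by factoring over the stable/unstable splitting, pulling out $\det(\mathcal{P}_\gamma|_{E_s})$, and observing that $\det(I-\mathcal{P}_\gamma|_{E_u})$ and $\det(I-\mathcal{P}_\gamma^{-1}|_{E_s})$ are both positive since the relevant spectral radii are less than $1$. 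The only difference is that you spell out the meromorphic-continuation and coefficient-matching step for $\im\lambda\gg1$ explicitly, which the paper leaves implicit.
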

\begin{proof}
Following  \cite[(2.4)-(2.5)]{dz}, since 
$\det(I-\mathcal P_\gamma)=\sum_{k=0}^{n-1}(-1)^k \tr\left(\bigwedge^k \mathcal P_\gamma\right)$,  by \eqref{eqzetak} and \eqref{eqzetaKK}, it is enough to show
\begin{align}\label{eqbbsign}
    |\det(I-\mathcal{P}_{\gamma})|=(-1)^{\dim E_s}\ \sgn\left(\det \left. \mathcal{P}_\gamma\right|_{E_s}\right)\det(I-\mathcal{P}_{\gamma}).
\end{align}
Remark that
\begin{align}\label{eqDet}
\begin{split}
\det(I-\mathcal{P}_{\gamma})&=\det(I-\mathcal{P}_{\gamma}|_{E_u})\det(I-\mathcal{P}_{\gamma}|_{E_s})\\
&=(-1)^{\dim E_s}\det(I-\mathcal{P}_{\gamma}|_{E_u})\det(I-\mathcal{P}_{\gamma}^{-1}|_{E_s})\det(\mathcal{P}_{\gamma}|_{E_s}).
\end{split}
\end{align}
As time is running in the negative direction, we have by (\ref{ineqAnosov}) that the eigenvalues $\lambda$ of $\mathcal{P}_{\gamma}|_{E_u}$ have $|\lambda|<1$, and the eigenvalues $\mu$ of $\mathcal{P}_{\gamma}^{-1}|_{E_s}$ have $|\mu|<1$. This gives any eigenvalues of $I-\mathcal{P}_{\gamma}|_{E_u}$ to be either $1-\lambda$ for $\lambda\in (-1, 1)$ or conjugate pairs $1-\lambda, 1-\overline{\lambda}$ when $\lambda$ is not real. In any case, we get by multiplying that $$\det(I-\mathcal{P}_{\gamma}|_{E_u})>0$$ and similarly $$\det(I-\mathcal{P}_{\gamma}^{-1}|_{E_s})>0.$$ Then taking signs in (\ref{eqDet}), we get \eqref{eqbbsign}.
%gives $$|\det(I-\mathcal{P}_{\gamma})|=(-1)^{\dim E_s}\ \sgn\left(\det\left(\left. \mathcal{P}_\gamma\right|_{E_s}\right)\right)\det(I-\mathcal{P}_{\gamma}).$$ %where $y_{\gamma}$ is any point on $\gamma$.
%By \eqref{eqzeta} and \eqref{eqbbsign}, we have:
%\begin{align*}
% \zeta_K(\lambda)%&=\exp\left(-\sum_{\gamma\subset K}\frac{T_{\gamma}^{\sharp}e^{i\lambda T_{\gamma}}}{T_{\gamma}}\right)\\
%&=\exp\left(-\sum_{\gamma\subset K}\frac{T_{\gamma}^{\sharp}e^{i\lambda T_{\gamma}}\sum_{k=0}^{n-1}(-1)^k\tr\left(\bigwedge^k\mathcal{P}_{\gamma}\right)}{T_{\gamma}\det\left(I-\mathcal{P}_{\gamma}\right)}\right)\\
%&=\exp\left(-\sum_{\gamma\subset K}\frac{T_{\gamma}^{\sharp}e^{i\lambda T_{\gamma}}\sum_{k=0}^{n-1}(-1)^{k+\dim E_s}\tr\left(\bigwedge^k\mathcal{P}_{\gamma}\right)\sgn\left(\det \left.\mathcal{P}_\gamma\right|_{E_s}\right)}{T_{\gamma}|\det(I-\mathcal{P}_{\gamma})|}\right).\\
%\end{align*}
%Taking a logarithm and differentiating and using Guillemin trace formula \eqref{eqGuillemin1}, we get \eqref{eqGui}. 
%\begin{align*}
%\dd_{\lambda}\log\zeta_K(\lambda)&=-i\sum_{k=0}^{n-1}\sum_{\gamma}\frac{(-1)^{k+\dim E_s}T_{\gamma}^{\sharp}e^{i\lambda T_{\gamma}}\tr\left(\bigwedge^k\mathcal{P}_{\gamma}\right)\sgn\left(\det \left.\mathcal{P}_\gamma\right|_{E_s}\right)}{|\det(I-\mathcal{P}_{\gamma})|}\\
%&=-i\sum_{k=0}^{n-1}(-1)^{k+\dim E_s}\int_0^{\infty}e^{i\lambda t}\tr^{\flat}\left.\phi_{-t}^*\right|_{C^{\infty}\left(M;\widetilde{\mathcal{E}_0^k}\right)}dt.
%\end{align*}
\end{proof}

\begin{rem}
The key point of our argument is based on the smoothness of $\mathscr O(E_s)$. Thanks to this property, most of the analytic arguments in the proof of Proposition \ref{mainlem} are reduced to 
\cite{dglong}. 
In \cite[Section 2]{bt}, Baladi and Tsujii used the orientation bundle in a different way for the flow with discrete time.
\end{rem}

%Letting $P_k=-i\mathcal{L}_V$, where $\mathcal{L}_V$ denotes the Lie derivative acting on sections of $\widetilde{\mathcal{E}_0^k}$ via the pullback defined above. we compute for positive $t_0$ smaller than the smallest $T_{\gamma}$:
%\begin{align*}
%\dd_{\lambda}\log\zeta_K(\lambda)&=-i\sum_{k=0}^{n-1}(-1)^{k+\dim E_s}e^{i\lambda t_0}\int_0^{\infty}e^{it\lambda}\tr^{\flat}\phi_{-t_0}^*e^{-itP_k}\,dt.
%&=-i\sum_{k=0}^{n-1}(-1)^{k+\dim E_s}e^{i\lambda t_0}\tr^{\flat}\left(\phi_{-t_0}^*(P_k-\lambda)^{-1}\right)
%\end{align*}
%from which the argument in \cite[Section 5]{dglong} finishes the proof.

\numberwithin{thm}{section}

\section{Vanishing Order at Zero on a Contact $3$-Manifold}

In this section, we assume that $M$ is a connected closed   $3$-manifold with a contact form $\alpha$, and that $V$ is the associated Reeb vector field.
We suppose also that the flow $(\phi_t)_{t\in \mathbb R}$ of $V$ is Anosov. 
%$M$ is a contact $3$-manifold with form $\alpha$, and specialize to the case of Anosov flow $\phi$.
One such example would be when $M=S^*\Sigma$, the cosphere bundle of a connected  closed  surface  $\Sigma$ with negative (variable) curvature, and $(\phi_t)_{t\in \mathbb R}$ is geodesic flow.

The following result was proven in \cite{dzsurfaces}:

\begin{thm}
If $(\phi_t)_{t\in \mathbb R}$ is a contact Anosov flow on a  connected closed $3$-manifold with orientable $E_u$ and $E_s$, the Ruelle zeta function has vanishing order at $\lambda=0$ equal to $b_1(M)-2$, where $b_1(M)$ denotes the first Betti number of $M$.
\end{thm}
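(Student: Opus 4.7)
The plan is to combine the factorization of Lemma \ref{lemdet} with a direct resonance calculation for the Lie derivative $\mathcal{L}_V$. In the setting of the theorem, $E_s$ is orientable, so $o(E_s)$ is trivial, $\widetilde{\mathcal E_0^k}=\mathcal E_0^k$, and $\sgn\det(\mathcal P_\gamma|_{E_s})\equiv 1$; hence $\zeta_{K,k}$ reduces to the untwisted $k$-th zeta function of \cite{dz}. With $n=3$ and $\dim E_s=1$, Lemma \ref{lemdet} then reads
\begin{equation*}
\zeta_R(\lambda) = \frac{\zeta_{K,1}(\lambda)}{\zeta_{K,0}(\lambda)\,\zeta_{K,2}(\lambda)},
\end{equation*}
so the vanishing order at $0$ is $m_R(0)=m_1(0)-m_0(0)-m_2(0)$, where $m_k(0)$ denotes the order of $\zeta_{K,k}$ at $\lambda=0$.

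Next I would invoke the microlocal framework of \cite{dz}: on suitable anisotropic Sobolev spaces, the resolvent $(-i\mathcal{L}_V-\lambda)^{-1}$ acting on $\mathcal E_0^k$ extends meromorphically to $\C$, and $m_k(0)$ coincides with the algebraic multiplicity of the Pollicott--Ruelle resonance of $-i\mathcal{L}_V$ at $0$ on $\mathcal E_0^k$; equivalently, $m_k(0)=\dim V_k$, where $V_k$ is the finite-dimensional space of smooth generalized resonant states with wavefront set contained in $E_u^*$.

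The extreme cases are quick. For $k=0$ the space $V_0$ consists of constants, giving $m_0(0)=1$. For $k=2$ the form $d\alpha\in\mathcal E_0^2$ is closed and $\mathcal{L}_V$-invariant, and the non-degenerate pairing $\int_M\alpha\wedge\,\cdot\,$ between $V_2$ and $V_0$ forbids Jordan blocks, giving $m_2(0)=1$. The heart of the proof is $k=1$. I would construct a natural map $\Psi\colon H^1(M;\R)\to V_1$ by $[\omega]\mapsto\pi_0\bigl(\omega-(\iota_V\omega)\alpha\bigr)$, where $\pi_0$ is the spectral projector onto the kernel of $\mathcal{L}_V$ on $\mathcal E_0^1$. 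Injectivity should follow from Cartan's formula $\mathcal{L}_V=d\iota_V+\iota_V d$, since any class in $\ker\Psi$ is then represented by something of the form $\mathcal{L}_V\eta$ modulo multiples of $\alpha$, hence exact.

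The hard part is showing that $\Psi$ is surjective and that $V_1$ is semisimple, so that $\dim V_1=b_1(M)$ exactly rather than merely as a lower bound. Here the contact structure is essential: using the Anosov splitting $T^{*}M=\R\alpha\oplus E_s^{*}\oplus E_u^{*}$ together with the complementary wavefront regularity of the forward and backward resolvents, one shows that every $s\in V_1$ is closed as a current and hence defines a de Rham class, producing an inverse to $\Psi$. Jordan blocks at $0$ are ruled out by exhibiting a non-degenerate pairing $(s,s')\mapsto\int_M\alpha\wedge s\wedge s'$ on $V_1$, whose non-degeneracy is powered by the symplectic form $d\alpha$ on the contact plane. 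Combining these ingredients yields $m_1(0)=b_1(M)$, and the three multiplicities assemble into $m_R(0)=b_1(M)-2$.
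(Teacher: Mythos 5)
Your overall strategy---factor $\zeta_R$ via Lemma~\ref{lemdet} into $m_R(0)=m_1(0)-m_0(0)-m_2(0)$, identify each $m_k(0)$ with a resonance multiplicity for $P_k=-i\mathcal L_V$ on $\mathcal E_0^k$, and compute by studying resonant states in $\Di'_{E_u^*}$---is the right one, and your tallies $m_0(0)=m_2(0)=1$, $m_1(0)=b_1(M)$ are what \cite{dzsurfaces} proves. Note that this theorem is quoted in the present paper from \cite{dzsurfaces} and not reproved here; the closest internal analogue is the nonorientable argument of Section~3 (Propositions~\ref{0forms}--\ref{prop3ma}), which is structurally parallel but strictly easier because there $\Res_0(0)=\Res_2(0)=\{0\}$, so $R_0$ is holomorphic at $0$ and no Jordan-block obstruction in degree $0$ or $2$ has to be ruled out.

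The genuine gap is the semisimplicity argument for $k=1$. The pairing $(s,s')\mapsto\int_M\alpha\wedge s\wedge s'$ you propose on $V_1=\Res_1(0)$ is not well-defined: both $s$ and $s'$ have wavefront contained in the \emph{same} conic set $E_u^*$, and the H\"ormander criterion for multiplying distributions fails, so $s\wedge s'$ has no meaning. A usable pairing must match resonant states against coresonant states (wavefront in $E_s^*$), or one must first replace a distributional state by a smooth representative. The latter is what the paper does in the nonorientable analogue, Proposition~\ref{prop3ma}: one writes $v=w+d\phi$ via the Hodge-type Lemma~\ref{hodge} with $w$ smooth, computes $\int_M du\wedge w$ by Stokes to deduce $\la\mathcal L_V\phi,\phi\ra_{L^2}=0$, invokes \cite[Lemma 2.3]{dzsurfaces} to upgrade $\phi$ to a smooth section, and only then concludes $v=0$. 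Without some such argument, $m_1(0)=\dim\Res_1(0)$ is unjustified. Two smaller points. First, closedness of every $s\in\Res_1(0)$ is true but for a simpler, algebraic reason than ``complementary wavefront regularity'': since $\iota_Vs=\iota_Vd\alpha=0$, the distributional $3$-form $d\alpha\wedge s$ is annihilated by the (pointwise injective) bundle map $\iota_V\colon\Lambda^3\to\Lambda^2$, hence vanishes; by Stokes $\int_M\alpha\wedge ds=\int_M d\alpha\wedge s=0$, killing the constant in $ds=c\,d\alpha$. Second, your $\Psi\colon H^1(M;\R)\to V_1$ built from the spectral projector $\pi_0$ runs opposite to the paper's map $\Res_1(0)\to H^1$ obtained from Lemma~\ref{hodge}; the latter direction is cleaner, and its surjectivity in the orientable case hinges on the extra Stokes identity $\Pi_0(\iota_Vv)=0$ for closed smooth $v$, needed precisely because---unlike in Proposition~\ref{p38}---$R_0$ \emph{does} have a pole at $0$ here.
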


The goal of this section is to determine the order of vanishing of $\zeta_R$ at $0$ in the case that $E_s$, $E_u$ are not orientable, and hence give a proof of Theorem \ref{contact}. We remark that since a contact manifold is orientable, orientability of $E_s$ is equivalent to orientability of $E_u$. 

\subsection{The twisted cohomology} \label{sTwicoh}
Let us recall some background and general facts on the twisted cohomology of a flat vector bundle. Let $X$ be a closed manifold. Let $F$ be a flat vector bundle on $X$ with flat connection $\nabla$. It induces a sheaf $\mathcal F$ on $X$ defined by locally constant sections, i.e., if $U\subset X$ is an open set, then 
$$\mathcal F(U)=\{s\in C^\infty(U;F|_U ): \nabla s=0\}.$$
The twisted cohomology $H^\bullet(X; F)$ is defined by the cohomology of the sheaf $\mathcal F$ \cite[Section II.4.4]{Godement73}. They are the algebraic invariants which describe the rigidity properties of the global flat sections of $F$. Let $b_k(F)$ be the twisted Betti number 
$$b_k(F)=\dim H^k(X;F).$$
If $F$ is the trivial line bundle, we get the classical de Rham cohomology with real coefficients.

To evaluate $H^\bullet(X; F)$, one can use the twisted de Rham complex. Indeed, if we denote  $F^k=\Lambda^k(T^*X)\otimes F$,  the flat connection $\nabla$ extends to an operator  $d_k:C^\infty(X;F^k)\to C^\infty(X;F^{k+1})$ by Leibniz rule: if $\alpha\in C^\infty(X;\Lambda^k(T^*X))$ and $s\in C^\infty(X;F)$, we have 
$$d_k(\alpha\cdot s)=d\alpha \cdot s+(-1)^k \alpha \wedge \nabla s.$$
By the flatness of $\nabla$, we have $d_{k+1}d_k=0$, so that $(C^\infty(X;F^\bullet),d_\bullet)$ is a complex. By the de Rham isomorphism \cite[Théorème II.4.7.1]{Godement73}, we have
\begin{align}\label{eqHdd}
H^k(X;F)=\ker d_k/{\rm Im} d_{k-1}.    
\end{align}

As an analogue of \cite[Lemma 2.1]{dzsurfaces}, %for twisted forms,
using the theory of elliptic operators, we can evaluate $H^\bullet(X;F)$ using the complex of twisted currents, or more generally twisted currents with wavefront conditions. 

More precisely, let $\Gamma\subset T^*X$ be a closed cone. We denote by $\Di'_{\Gamma}(X;F^k)$ the space of $F^k$-valued distributions whose wavefront set is contained in $\Gamma$ (see \cite[Section 2.1]{dzsurfaces}).  By microlocality, we have  $$
 d_k:\Di'_{\Gamma}\left(X;F^k\right)\to \Di'_{\Gamma}\left(X;F^{k+1}\right).$$
For simplicity, we will write $d$ sometimes.   

%Now given any vector bundle $E$ over $M$, 

%We need the following lemma, an analogue of the Hodge-type lemma \cite[Lemma 2.1]{dzsurfaces} for twisted forms. 
%Remark that the exterior derivative acts locally, so can be extended to an operator $$d:C^{\infty}\left(M;\widetilde{\mathcal{E}_0^k}\right)\to C^{\infty}\left(M;\widetilde{\mathcal{E}_0^{k+1}}\right)$$ and by microlocality to $$\Di'_{E_u^*}\left(M;\widetilde{\mathcal{E}_0^k}\right)\to \Di'_{E_u^*}\left(M;\widetilde{\mathcal{E}_0^{k+1}}\right).$$

%Show trivialization-independence? Or define via pullback

%Then the following is true:
\begin{lem}\label{hodge}
If $u\in\Di'_{\Gamma}\left(X;F^k\right)$ and $du\in C^{\infty}\left(X;F^{k+1}\right)$, then there exist $v\in C^{\infty}\left(X;F^k\right)$ and $w\in\Di'_{\Gamma}\left(X;F^{k-1}\right)$ such that 
$$u=v+dw.$$
In particular, if $u\in\Di'_{\Gamma}\left(X;F\right)$ and $du\in C^{\infty}\left(X;F^1\right)$, then $u\in C^{\infty}\left(X;F\right).$
\end{lem}
\begin{proof}
Take a Riemannian metric on $X$ and a Hermitian metric on $F$. Remark that these two metrics induce a fibrewise scalar product  $\langle\cdot,\cdot\rangle$ on $F^k$. For $u,v\in C^\infty\left(X;F^k\right)$, %the induced scalar product  $\la u,v\ra$ is defined as a function independent of trivialization.
%Then from the usual inner product on forms, there is a Hilbert space of (rough) sections of $\widetilde{\mathcal{E}^k}$ given by
we can define the $L^2$-product by 
\begin{align}\label{eqL2}
\la u,v\ra_{L^2\left(X;F^k\right)}=\int_X\la u,v\ra\,\dvol,
\end{align}
where $\dvol$ is a volume form. 
Let $\delta_{k+1}:C^\infty\left(X;F^{k+1}\right)\to C^\infty\left(X;F^k\right)$ be the formal adjoint of $d$ with respect to the  $L^2$-product \eqref{eqL2}. Define the twisted Hodge Laplacian by $$\Delta_k=d_{k-1}\delta_{k}+\delta_{k+1} d_k: C^\infty\left(X;F^k\right)\to C^\infty\left(X;F^k\right).$$ 
Then $\Delta_k$ is an essentially self-adjoint  second order elliptic differential operator. The remainder of the proof carries over identically from  that of \cite[Lemma 2.1]{dzsurfaces}.
\end{proof}

Remark that if $F$ is the orientation bundle of certain vector bundle and $u\in C^\infty(X;F)$, then for $x\in X$, $|u(x)|^2$ is independent of the choice of trivializations. It defines a Hermitian metric on $F$.

\subsection{ Resonant State Spaces.}
Let $M$ be a connected  $3$-dimensional closed manifold with a contact form $\alpha\in C^\infty(M,T^*M)$. Let $V$ be the associated Reeb vector field. Then, 
\begin{align}\label{eqReeb1}
i_V\alpha=1, \quad \iota_Vd\alpha=0. 
\end{align}
We assume that the flow $(\phi_t)_{t\in \mathbb R}$ associated to $V$ is  Anosov. Let $E^*_u\subset T^*M$ be the dual of $E_s$. We will apply the results of Section \ref{sTwicoh} to the case where $(X,F,\Gamma)=(M, \mathcal O(E_s),E^*_u).$

Since the flow $(\phi_t)_{t\in \mathbb R}$ is Anosov, we have $K=M$. For $0\le k\le 2$,  we write $\zeta_k=\zeta_{K,k}$. By \eqref{eqZetakZetak}, we have 
%As in \cite[Section 3.1]{dzsurfaces}, we may write the Ruelle zeta function as
\begin{align}\label{EqZetaR}
\zeta_R(\lambda)=\frac{\zeta_1(\lambda)}{\zeta_0(\lambda)\zeta_2(\lambda)}.
\end{align}

We consider %again 
the operator %(suppressing $k$ for convenience)
$P_k=-i\mathcal{L}_V$, where $\mathcal{L}_V$ (in a slight abuse of notation) denotes the natural action on sections of $\widetilde{\mathcal{E}_0^k}$, given by the Lie derivative on sections of $\mathcal{E}_0^k$ tensored with the flat connection on $\mathscr{O}(E_s)$. For $\text{Im}\lambda\gg1$ large enough, the integral $R_k(\lambda)=i\int_0^\infty e^{i\lambda t}\phi_{-t}^* dt$
converges and defines a bounded operator on the $L^2$-space; this is nothing more than the resolvent operator of $P_k$.  Then by \cite[Section 2.3]{dzsurfaces} we have that $R_k$ extends meromorphically to the entire complex plane, 
$$R_k(\lambda): C^\infty\left(M;\widetilde{\mathcal E_0^k}\right)\to \mathcal{D}'\left(M;\widetilde{\mathcal E_0^k}\right).$$
More precisely, near $\lambda_0\in \mathbb C$,  we have 
 $$R_k(\lambda)=R_{k,H}(\lambda)-\sum_{j=1}^{J(\lambda_0)}\frac{(P_k-\lambda)^{j-1}\Pi_k}{(\lambda-\lambda_0)^j}$$ where $R_{k,H}$ is a holomorphic family defined near $\lambda_0$, $J(\lambda_0)\in \mathbb N$,  and $\Pi_k$ has rank $m_k(\lambda_0)<\infty$. By the arguments at the end of \cite{dz}, we have that at $\lambda_0$, the function  $\zeta_k$ has a zero of order $m_k(\lambda_0)$.

We define the space of \emph{resonant states} at $\lambda_0$ to be $$\Res_k(\lambda_0)=\left\{u\in\Di'_{E^*_u}\left(M;\widetilde{\mathcal{E}_0^k}\right):(P_k-\lambda_0)u=0\right\}.$$ 
%where $E_u^*$ is the dual of $E_s$.
Then a special case of \cite[Lemma 2.2]{dzsurfaces} gives the following:
\begin{lem}\label{semisimplicity}
Suppose $P_k$ satisfies the \emph{semisimplicity condition}: $$u\in\Di'_{E_u^*}\left(M;\widetilde{\mathcal{E}_0^k}\right),\quad (P_k-\lambda_0)^2u=0\,\,\implies \,\,(P_k-\lambda_0)u=0.$$ Then $m_k(\lambda_0)=\dim\Res _k(\lambda_0)$.
\end{lem}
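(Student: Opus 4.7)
The plan is to follow the proof of \cite[Lemma 2.2]{dzsurfaces}, the only novelty being that sections take values in the twisted bundle $\widetilde{\mathcal{E}_0^k}=\mathcal{E}_0^k\otimes o(E_s)$. Let $\mathrm{Ran}(\Pi_k)$ denote the image of the residue projector $\Pi_k$ from the Laurent expansion of $R_k(\lambda)$ near $\lambda_0$; by construction $m_k(\lambda_0)=\mathrm{rank}(\Pi_k)=\dim\mathrm{Ran}(\Pi_k)$. The goal is to show $\mathrm{Ran}(\Pi_k)=\Res_k(\lambda_0)$ under the semisimplicity hypothesis.

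First I would identify $\mathrm{Ran}(\Pi_k)$ with the generalized resonant space
$$\bigl\{u\in\Di'_{E_u^*}(M;\widetilde{\mathcal{E}_0^k}) : (P_k-\lambda_0)^{J(\lambda_0)}u=0\bigr\}.$$
Matching Laurent coefficients in the resolvent identity $(P_k-\lambda)R_k(\lambda)=\mathrm{Id}$, valid on the appropriate anisotropic Sobolev space, yields $\Pi_k^2=\Pi_k$ and $(P_k-\lambda_0)^{J(\lambda_0)}\Pi_k=0$; conversely, evaluating the Laurent expansion on any $u$ in the generalized resonant space gives $\Pi_k u=u$. Hence the two subspaces coincide.

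Next I would collapse this generalized resonant space to $\Res_k(\lambda_0)$ by a short induction on $N\geq 2$: the case $N=2$ is the hypothesis, and for $N\geq 3$, if $(P_k-\lambda_0)^N u=0$, set $v=(P_k-\lambda_0)^{N-2}u$; then $(P_k-\lambda_0)^2 v=0$ forces $(P_k-\lambda_0)v=0$, i.e., $(P_k-\lambda_0)^{N-1}u=0$. Iterating down to $N=1$ identifies the generalized resonant space with $\Res_k(\lambda_0)$, giving $m_k(\lambda_0)=\dim\Res_k(\lambda_0)$.

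The main obstacle is not this algebraic step but the underlying microlocal Fredholm framework: one must verify that the radial point estimates, anisotropic Sobolev spaces adapted to $E_u^*$, and meromorphic continuation of $R_k(\lambda)$ with finite-rank residues, as developed in \cite{dz,dglong,dzsurfaces}, transcribe to the twisted bundle $\widetilde{\mathcal{E}_0^k}$. Since $o(E_s)$ is a flat real line bundle with locally constant $\pm 1$-valued transition functions, the principal symbol of $P_k$ on $\widetilde{\mathcal{E}_0^k}$ agrees locally with that of $-i\mathcal{L}_V$ on $\mathcal{E}_0^k$, and all propagation estimates patch together without modification. Granting this routine transcription, the conclusion follows.
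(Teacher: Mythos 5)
Your proposal is correct and follows essentially the same route as the paper, which simply cites \cite[Lemma 2.2]{dzsurfaces} without reproducing the argument. You have expanded the citation into the standard proof: identifying $\mathrm{Ran}(\Pi_k)$ with the generalized resonant space via matching Laurent coefficients, then collapsing to $\Res_k(\lambda_0)$ by induction from the semisimplicity hypothesis, together with the (correct) observation that the anisotropic Sobolev/Fredholm machinery carries over verbatim to the flat twisted bundle $\widetilde{\mathcal{E}_0^k}$ since $o(E_s)$ has locally constant $\pm 1$ transition functions and hence does not alter any symbol computation.
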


Recall that we are trying to find the order at $\lambda=0$ of $\zeta_R$, which by (\ref{EqZetaR}) is simply 
\begin{align}\label{eqmR0}
m_R(0)=m_1(0)-m_0(0)-m_2(0).    
\end{align}
 We will compute each of these individually, by computing $\dim\Res_k(0)$ and checking that the semisimplicity condition in Lemma \ref{semisimplicity} holds.

We begin with twisted ``$0$-forms', which are just sections of the orientation bundle $\mathscr{O}(E_s)$.

\begin{prop}\label{0forms}
If $E_s$ is nonorientable, the space $\Res_0(0)$ is $\{0\}$.
\end{prop}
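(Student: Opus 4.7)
The plan is to show that every $u \in \Res_0(0)$ is smooth, then use ergodicity of the Reeb flow to conclude that $|u|$ is pointwise constant, and finally exploit the nonorientability of $E_s$ to rule out a nonzero constant. By definition $u \in \Di'_{E_u^*}\left(M; o(E_s)\right)$ and $\mathcal{L}_V u = 0$.

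First I would establish the regularity $u \in C^\infty(M; o(E_s))$. Since $u$ is a twisted $0$-form, Cartan's formula in the flat de~Rham complex gives $\mathcal{L}_V u = \iota_V du$, so $\iota_V\,du = 0$ and also $d(du) = 0$. Together with the wavefront condition, I would apply Lemma~\ref{hodge}, bootstrapping via the $d$-closed section $du \in \Di'_{E_u^*}\left(M;\widetilde{\mathcal E_0^1}\right)$ to first show $du$ is smooth, and then invoking the second assertion of Lemma~\ref{hodge} to upgrade $u$ to a smooth section of $o(E_s)$. (Equivalently, one could invoke the microlocal radial-source estimate used for zero resonances in \cite{dz,dzsurfaces}.)

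Next, equip $o(E_s)$ with any smooth fibre metric $|\cdot|$; since $o(E_s)$ has structure group $\{\pm 1\}$, parallel transport by the flat connection preserves $|\cdot|$, so $\mathcal{L}_V u = \nabla_V u = 0$ implies
\[
V\,|u|^2 \;=\; 2\,\langle \nabla_V u,\, u \rangle \;=\; 0.
\]
Thus $|u|^2 \in C^\infty(M)$ is a flow-invariant function. Since $M$ is connected and closed and the Reeb flow preserves the volume form $\alpha \wedge d\alpha$, Anosov's theorem yields ergodicity, so $|u|^2$ is constant. If this constant were strictly positive, then $u/|u|$ would be a nowhere-vanishing continuous section of the real line bundle $o(E_s)$, trivializing it; by Definition~\ref{ob} this contradicts the nonorientability of $E_s$. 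Therefore $|u|^2 \equiv 0$, so $u = 0$ and $\Res_0(0) = \{0\}$.

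I expect the main obstacle to be the regularity step: Lemma~\ref{hodge} requires smoothness of $du$, whereas $\mathcal{L}_V u = 0$ only delivers $\iota_V du = 0$. Closing this gap calls for either a careful bootstrap applying Lemma~\ref{hodge} to $du$ (using $d(du)=0$) or transporting the propagation/radial-point machinery from the orientable treatment of \cite{dzsurfaces} to the twisted setting. The subsequent ergodicity and topological arguments are relatively soft once smoothness is in hand.
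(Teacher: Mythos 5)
Your proposed \emph{regularity} step does not close as stated. Applying Lemma~\ref{hodge} to the $d$-closed distributional $1$-form $du$ only yields a decomposition $du=v+dw$ with $v$ smooth and $w\in\Di'_{E_u^*}(M;o(E_s))$; it does not show that $du$ itself is smooth, so you cannot then feed $u$ into the second assertion of Lemma~\ref{hodge}. (From the decomposition one can conclude that $u-w$ is smooth, but nothing about $w$.) You correctly anticipate that the radial-source/propagation machinery is the way out: the paper invokes precisely this, citing \cite[Lemma 2.3]{dzsurfaces}, which uses the symmetry of $P_0=-i\mathcal{L}_V$ with respect to the contact volume $\alpha\wedge d\alpha$ to conclude that a distributional resonant state at a real resonance with wavefront in $E_u^*$ is in fact smooth. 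So the Hodge-lemma bootstrap is not equivalent to that estimate; you need the latter.

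Once $u\in C^\infty(M;o(E_s))$ is in hand, your argument and the paper's genuinely diverge. You equip $o(E_s)$ with its canonical flat $O(1)$-metric, note $V|u|^2=0$, and invoke Anosov's ergodicity theorem for volume-preserving Anosov flows to conclude that $|u|^2$ is constant; a positive constant would make $u/|u|$ a nowhere-vanishing section trivializing $o(E_s)$, contradicting nonorientability. This is correct, but relies on a substantial dynamical input. The paper instead shows $du=0$ pointwise by an elementary decay argument: writing $\langle du(x),v\rangle=\widetilde\Phi_t\langle du(\phi_{-t}x), d\phi_{-t}v\rangle$ and sending $t\to\pm\infty$ kills $du$ on $E_u$ and $E_s$, while $\iota_V du=\mathcal L_V u=0$ kills it on $E_0$; then $u\in H^0(M;o(E_s))=\{0\}$, which is the same topological fact you use (no nowhere-vanishing parallel section of a nontrivial flat line bundle), just reached without ergodicity. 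So your route is valid but uses heavier machinery where a direct computation suffices, and the regularity gap must be repaired by the self-adjointness/radial-estimate argument rather than Lemma~\ref{hodge}.
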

\begin{proof}
Suppose $u\in\Res_0(0)$, i.e., 
\begin{align}\label{eqP0}
    P_0u=0. 
\end{align}
Since the flow $(\phi_t)_{t\in\mathbb R}$ preserves the contact volume form $\alpha \wedge d\alpha$, $P_0: \cinf(M;\mathscr{O}(E_s))\to \cinf(M;\mathscr{O}(E_s))$ is a symmetric operator  with respect to the $L^2$-product \eqref{eqL2}.   By \cite[Lemma 2.3]{dzsurfaces}, $u\in C^{\infty}\left(M;\mathscr{O}(E_s)\right)$. Using $\dd_t\left(\phi_{-t}^*u\right)=-\phi_{-t}^*\nabla_Vu$ (where $\nabla$ is the flat connection), we see that
%By Cartan's formula, we have $$Vdu=0,$$ where $V:C^{\infty}\left(M;\mathscr{O}(E_s)\right)\to C^{\infty}\left(M;\mathscr{O}(E_s)\right)$ is given in trivializations by $V(p,v)=(p,Vv)$. 
%We then have
$u$ is constant on the flow line: for all $t\in \mathbb R$, 
\begin{align}\label{eqconfl}
u=\phi_{-t}^* u.
\end{align}
%Show trivialization independence.
Let $(x,v)\in TM.$ The pairing $\la du(x), v\ra $ is an element of $\mathscr{O}(E_s)_x$. 
%The absolute value is defined independent of trivialization, and 
By \eqref{eqpullb} and \eqref{eqconfl}, we have
$$\la du(x), v\ra=\la \phi_{-t}^*(du)(x),v \ra= \widetilde{\Phi}_t \la du(\phi_{-t}(x)),d\phi_{-t}(x)v\ra.$$
If $v\in E_u(x)$, then sending $t\to\infty$ gives $\la du(x), v\ra=0$ by (\ref{ineqAnosov}). Similarly, if $v\in E_s(x)$, then sending $t\to-\infty$ gives $\la du(x), v\ra=0$. This shows that 
\begin{align}\label{eqduEUS}
du|_{E_s\oplus E_u}=0.
\end{align}

By Cartan's formula and by \eqref{eqP0}, we have $\iota_V du=0$, i.e., 
\begin{align}\label{eqduE0}
     du|_{E_0}=0. 
\end{align}
By \eqref{eqduEUS} and \eqref{eqduE0}, we have $du=0$. So $u\in H^0(M;\mathscr{O}(E_s))$. Since $E_s$ is nonorientable, we have $H^0(M;\mathscr{O}(E_s))=0$, so $u=0$ and $\Res_0(0)$ is trivial.
%Now let $s\in C^{\infty}\left(M;\mathscr{O}(E_s)\right)$ be a section to be determined later. We have $s\otimes du\in C^{\infty}(M)$, and by \cite[Section 2.4]{dzsurfaces}, there is a $\phi\in C^{\infty}(M)$ such that $$s\otimes du=\phi\alpha$$ where $\alpha$ is the contact form. Then$$\phi \,\dvol=\phi\alpha\wedge d\alpha=\alpha\wedge d(\phi\alpha)=\alpha\wedge ds\wedge du.$$
%But if $x\in M$, we may choose local trivialization $U_x$ about $x$ and pick $s\in C^{\infty}_0(U_x)$ equal to $1$ in a neighborhood of $x$. Then in a neighborhood of $x$ we have $du=0$, so $u$ is locally constant. But as $M$ is connected, we have $u$ constant overall. Then if $u$ were nonzero, it would correspond to an orientation on $E_s$, which we assumed did not exist. Therefore, $u=0$, and $\Res_0(0)$ is trivial.
\end{proof}

\begin{cor}\label{0formcor}
If $E_s$ is nonorientable,  the multiplicity for $0$-forms is $m_0(0)=0$.
\end{cor}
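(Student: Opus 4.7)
The plan is to read off the corollary from Proposition \ref{0forms} via Lemma \ref{semisimplicity}. The lemma gives $m_0(0)=\dim\Res_0(0)$ provided the semisimplicity condition holds at $\lambda_0=0$; since Proposition \ref{0forms} already shows $\Res_0(0)=\{0\}$ in the nonorientable case, the only thing left to check is that $P_0$ is semisimple at $0$.

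To verify semisimplicity, I would take an arbitrary $u\in\Di'_{E_u^*}(M;o(E_s))$ with $P_0^2u=0$ and set $w=P_0u$. Since $P_0=-i\mathcal{L}_V$ is a first-order differential operator, it is microlocal, so the wavefront set of $w$ is still contained in $E_u^*$, i.e.\ $w\in\Di'_{E_u^*}(M;o(E_s))$. The identity $P_0w=P_0^2u=0$ then puts $w$ in $\Res_0(0)$, which is trivial by Proposition \ref{0forms}; hence $w=P_0u=0$, which is exactly the semisimplicity condition. Applying Lemma \ref{semisimplicity} and Proposition \ref{0forms} in succession gives $m_0(0)=\dim\Res_0(0)=0$.

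I do not anticipate any genuine obstacle: the two inputs—microlocality of the first-order operator $P_0$ and the vanishing $\Res_0(0)=\{0\}$—are already established. The only point that needs a brief mention rather than a calculation is that applying a differential operator preserves the wavefront-set condition defining $\Di'_{E_u^*}$, which is standard.
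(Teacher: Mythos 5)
Your proposal is correct and follows the same route as the paper: show semisimplicity of $P_0$ at $0$ by noting that $P_0 u \in \Res_0(0) = \{0\}$ (using Proposition \ref{0forms}), then apply Lemma \ref{semisimplicity}. The only difference is that you spell out the microlocality of $P_0$ (which keeps $P_0u$ in $\Di'_{E_u^*}$), a point the paper leaves implicit; this is a welcome but minor addition.
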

\begin{proof}
If $P_0^2(u)=0$, then $P_0u\in\Res_0(0)$. By Proposition \ref{0forms}, $P_0u=0$, so $u\in\Res_0(0)$. This shows semisimplicity, so by Lemma \ref{semisimplicity} we see that $m_0(0)=\dim\Res_0(0)=0$.
\end{proof}

\begin{prop}\label{2forms}
If $E_s$ is nonorientable, the space $\Res_2(0)$ is $\{0\}$.
\end{prop}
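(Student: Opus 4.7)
The plan is to reduce Proposition \ref{2forms} to Proposition \ref{0forms} by exhibiting an explicit isomorphism of smooth line bundles $\Psi : o(E_s) \to \widetilde{\mathcal{E}_0^2}$ that intertwines $P_0$ with $P_2$. The key observation is that on a $3$-manifold $\mathcal{E}_0^2 \subset \Lambda^2 T^*M$ has rank one, since at each $x \in M$ the fiber is the annihilator of $V(x)$ in $\Lambda^2 T_x^*M$, which is one-dimensional.

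To trivialize $\mathcal{E}_0^2$, I would use the exterior derivative of the contact form. Since $V$ is the Reeb vector field of $\alpha$, the identities $\iota_V \alpha = 1$ and $\iota_V d\alpha = 0$ show that $d\alpha$ is a smooth section of $\mathcal{E}_0^2$, and the contact condition $\alpha \wedge d\alpha \neq 0$ forces it to be nowhere vanishing. Moreover, by Cartan's formula,
\[
\mathcal{L}_V d\alpha = d(\iota_V d\alpha) + \iota_V d(d\alpha) = 0.
\]
Consequently, $\Psi(\sigma) = \sigma \otimes d\alpha$ defines a smooth bundle isomorphism $o(E_s) \to \widetilde{\mathcal{E}_0^2}$, and the Leibniz rule for the Lie derivative on a tensor product, together with $\mathcal{L}_V d\alpha = 0$, gives $P_2 \circ \Psi = \Psi \circ P_0$.

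Since $\Psi$ is pointwise multiplication by a smooth, nowhere-vanishing section, it extends to distributions and preserves the wavefront-set condition defining $\Di'_{E_u^*}$. It therefore restricts to an isomorphism $\Res_0(0) \xrightarrow{\sim} \Res_2(0)$, and Proposition \ref{0forms} then yields $\Res_2(0) = \{0\}$. The only substantive step is recognizing that $d\alpha$ trivializes $\mathcal{E}_0^2$; once this is in place the rest is a formal consequence, and the contact hypothesis is used precisely at this step.
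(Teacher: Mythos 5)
Your proposal is correct and takes essentially the same approach as the paper: the paper writes $u = v\, d\alpha$ with $v \in \Di'_{E_u^*}(M; o(E_s))$, which is exactly the trivialization of $\mathcal{E}_0^2$ by $d\alpha$ that you make explicit, and then uses $P_2(v\, d\alpha) = (P_0 v)\, d\alpha$, which is the intertwining you deduce from $\mathcal{L}_V d\alpha = 0$. You have simply unpacked the terse two-line argument into its constituent observations.
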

\begin{proof}
We claim that 
\begin{align}\label{eqa2}
\alpha\wedge: \mathcal E^2_0\to\mathcal E^3
\end{align}
is a bundle isomorphism. Indeed, using \eqref{eqReeb1}, it is easy to see that the inverse of \eqref{eqa2} is given by $\iota_V$. Tensoring with $\mathscr O(E_s)$, we get a bundle isomorphism
\begin{align}\label{eqa3}
\alpha\wedge: \widetilde{\mathcal E^2_0}\to\widetilde{\mathcal E^3}. 
\end{align}

Let $u\in\Res_2(0)$. Since $\mathcal E^3$ is generated by $\alpha\wedge d\alpha$, by \eqref{eqa3}, there is $v\in\Di'_{E_u^*}(M;\mathscr{O}(E_s))$ such that 
$\alpha \wedge u= v \alpha\wedge d\alpha$. 
Applying $\iota_V$ and using $\iota_Vu=0$, we have $u=vd\alpha$. Then 
$$0=P_2(u)=(P_0v)d\alpha.$$
But this gives $P_0v=0$, so by Proposition \ref{0forms} we have $v=0$. Therefore, $u=0$.
\end{proof}

The following is then clear for the same reason as Corollary \ref{0formcor}.
\begin{cor}\label{cor2}
If $E_s$ is nonorientable, the multiplicity for $2$-forms is $m_2(0)=0$.
\end{cor}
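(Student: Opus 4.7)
The plan is to copy the pattern of Corollary \ref{0formcor}, with Proposition \ref{2forms} playing the role previously played by Proposition \ref{0forms}. The task reduces to two pieces: verifying the semisimplicity hypothesis of Lemma \ref{semisimplicity} for $P_2$ at $\lambda_0=0$, and then invoking Proposition \ref{2forms} to conclude $\dim\Res_2(0)=0$.

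For semisimplicity, I would argue as follows. Given $u\in \Di'_{E_u^*}(M;\widetilde{\mathcal{E}_0^2})$ with $P_2^2 u=0$, set $w=P_2 u$. Since $P_2=-i\mathcal{L}_V$ is a first order differential operator, microlocality keeps $w$ inside $\Di'_{E_u^*}(M;\widetilde{\mathcal{E}_0^2})$, exactly as in the use of microlocality for $d_k$ recorded earlier in this section. Combined with $P_2 w = P_2^2 u = 0$, this places $w\in\Res_2(0)$, which is trivial by Proposition \ref{2forms}. Hence $P_2 u=0$ and $u\in\Res_2(0)$, so the semisimplicity condition is verified.

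Applying Lemma \ref{semisimplicity} then yields $m_2(0)=\dim\Res_2(0)$, and another appeal to Proposition \ref{2forms} gives $m_2(0)=0$. I do not anticipate any genuine obstacle: the entire argument is a mechanical transcription of the proof of Corollary \ref{0formcor}, the only substitution being Proposition \ref{0forms} $\rightsquigarrow$ Proposition \ref{2forms}, which is why the paper simply remarks that the conclusion ``is then clear for the same reason''.
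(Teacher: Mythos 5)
Your proof is correct and is exactly the argument the paper intends when it says the corollary ``is then clear for the same reason as Corollary~\ref{0formcor}'': verify semisimplicity by noting $P_2 u\in\Res_2(0)=\{0\}$, then invoke Lemma~\ref{semisimplicity} and Proposition~\ref{2forms}. Your added remark that $P_2$, being a differential operator, preserves the wavefront-set condition is a correct (if routine) point that the paper leaves implicit in Corollary~\ref{0formcor} as well.
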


We now turn to the case of $P_1$ acting on the space of twisted $1$-form-valued distributions $\Di'_{E_u^*}\left(M;\widetilde{\mathcal{E}_0^1}\right)$.
%We first must recall the ``twisted" analogoue of the standard de Rham groups. Given any flat line bundle $E$ over $M$ (in other words, one with constant transition functions), define the \emph{de Rham cohomology groups of $E$} (also called the \emph{de Rham cohomology groups with local coefficients in $E$}) to be the quotient spaces $$H^k(M; E)=\ker d_k/\im d_{k+1}$$ where $d_k$ is the exterior derivative acting on sections of $\Omega^k(M)\otimes E$. Let $$b_k(E)=\dim H^k(M;E).$$
We can now state the analogous proposition for $1$-forms:
\begin{prop}\label{p38}
If $E_s$ is nonorientable, the space $\Res_1(0)$ has dimension $b_1(\mathscr{O}(E_s))$.
\end{prop}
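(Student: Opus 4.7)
The plan is to construct an explicit isomorphism $\Phi:\Res_1(0)\xrightarrow{\sim} H^1(M;o(E_s))$, adapting the argument of \cite{dzsurfaces} to the twisted setting using the Hodge-type lemma (Lemma \ref{hodge}) and the triviality of $\Res_0(0)$ and $\Res_2(0)$ established in Propositions \ref{0forms} and \ref{2forms}.

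First, I would show that any $u\in\Res_1(0)$ is closed: since $\iota_V u=0$ and $\mathcal{L}_V u=0$, Cartan's formula gives $\iota_V du=0$, and $\mathcal{L}_V du=d\mathcal{L}_V u=0$. Hence $du\in\Res_2(0)$, which is $\{0\}$ by Proposition \ref{2forms}. Applying Lemma \ref{hodge} to $u$ yields a decomposition $u=v+dw$ with $v\in C^\infty(M;\widetilde{\mathcal{E}^1})$ and $w\in\Di'_{E_u^*}(M;o(E_s))$; since $du=0$, also $dv=0$, so we may set $\Phi(u)=[v]\in H^1(M;o(E_s))$. Well-definedness reduces to the following: if $v-v'=d(w-w')$ with both sides making sense, then the smoothness of $v-v'$ together with the ``in particular'' clause of Lemma \ref{hodge} applied to $w-w'$ forces $w-w'\in C^\infty$, so $[v]=[v']$.

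For injectivity, suppose $\Phi(u)=0$. Then $v=d\eta$ for some smooth $\eta$, so $u=d\xi$ with $\xi:=\eta+w\in\Di'_{E_u^*}(M;o(E_s))$. Since $\xi$ is a $0$-form, $\iota_V u=\iota_V d\xi=\mathcal{L}_V\xi$; but $\iota_V u=0$, so $\xi\in\Res_0(0)$, which is $\{0\}$ by Proposition \ref{0forms}. Thus $u=d\xi=0$. For surjectivity, given a smooth closed representative $v$, set $f=\iota_V v\in C^\infty(M;o(E_s))$. Corollary \ref{0formcor} says $m_0(0)=0$, so the meromorphic resolvent $R_0(\lambda)$ is holomorphic at $\lambda=0$, and hence the equation $P_0 w=if$, equivalently $\mathcal{L}_V w=-f$, is solvable with $w=R_0(0)(if)\in\Di'_{E_u^*}(M;o(E_s))$. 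Setting $u:=v+dw$, a direct computation using Cartan gives $\iota_V u=f+\mathcal{L}_V w=0$ and $\mathcal{L}_V u=\mathcal{L}_V v-df=df-df=0$, while the wavefront bound is inherited from $w$. Hence $u\in\Res_1(0)$ and $\Phi(u)=[v]$.

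The main technical point is the surjectivity step: one must invoke the meromorphic continuation of $R_0$ to produce the distributional primitive $w$ of $-f$ with the correct wavefront set, and verify that this is exactly what Corollary \ref{0formcor} guarantees. Everything else is formal manipulation with Cartan's formula and careful application of Lemma \ref{hodge}, where the twisted version replaces its scalar counterpart without change. This establishes $\dim\Res_1(0)=\dim H^1(M;o(E_s))=b_1(o(E_s))$.
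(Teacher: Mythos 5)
Your proposal is correct and follows essentially the same path as the paper's proof: you close $u$ via Proposition~\ref{2forms}, define the map to $H^1(M;o(E_s))$ through the Hodge-type decomposition of Lemma~\ref{hodge}, establish well-definedness via the ``in particular'' clause of that lemma, injectivity via Proposition~\ref{0forms}, and surjectivity by exploiting the holomorphy of $R_0$ at $0$ (guaranteed by $m_0(0)=0$) to produce the distributional correction. The only cosmetic difference is notation ($\Phi$ and $w$ versus the paper's $\Theta$ and $\phi$); the underlying isomorphism and every step of its verification coincide with the paper's argument.
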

\begin{proof}
The proof is analogous to that of \cite[Lemma 3.4]{dzsurfaces}, but slightly easier due to the holomorphy of the resolvent $R_0$ near $0$.
Let $u\in\Res_1(0)$. Then $du\in\Res_2(0)$ by Proposition \ref{2forms}, so $du=0$. By Lemma \ref{hodge} there is a $\phi\in\Di'_{E_u^*}(M;\mathscr{O}(E_s))$ such that
\begin{align*}
u-d\phi\in C^{\infty}\left(M;\widetilde{\mathcal{E}^1}\right),\quad d(u-d\phi)=0.
\end{align*} We shall show that the map: $$\Theta: u\mapsto[u-d\phi]\in H^1(M;\mathscr{O}(E_s))$$ is well-defined, linear and bijective, which is enough to prove the lemma.

\noindent\textbf{Well-Definedness and linearity}:

Suppose there is another section $\psi\in \Di'_{E_u^*}\left(M;\mathscr{O}(E_s)\right)$ with $u-d\psi\in C^{\infty}\left(M;\widetilde{\mathcal{E}^1}\right)$. Then subtracting gives $d(\phi-\psi)\in C^{\infty}\left(M;\widetilde{\mathcal{E}^1}\right)$, so $\phi-\psi\in C^{\infty}\left(M;\mathscr{O}(E_s)\right)$ by Lemma \ref{hodge}. This shows that the map $\Theta$ is well-defined. It is also easy to see that $\Theta$ is linear.

\noindent\textbf{Injectivity}:

If $\Theta(u)=0$, then $u-d\phi$ is exact, so without loss of generality we can assume that $u=d\phi$. Combining with  $\iota_V u=0$, we get $\phi\in\Res_0(0)$, so $\phi=0$ by Proposition  \ref{0forms}. Therefore $u=0$, and this shows $\Theta$ to be injective.

\noindent\textbf{Surjectivity}:

Let $v\in C^{\infty}\left(M;\widetilde{\mathcal{E}^1}\right)$ with $dv=0$. Then as $m_0(0)=0$, the resolvent $R_0$ is holomorphic near $0$. Take $\phi=i R_0(0)\iota_V v \in\Di'_{E_u^*}\left(M;\mathscr{O}(E_s)\right)$. Then $P_0\phi=i \iota_Vv$. This rearranges to $\iota_V(v+d\phi)=0$, so $v+d\phi\in\Res_1(0)$. This gives that $\Theta$ is surjective, and completes the proof of our proposition.
\end{proof}

\begin{prop}\label{prop3ma}
If $E_s$ is nonorientable, the multiplicity for $1$-forms is $m_1(0)=b_1(\mathscr{O}(E_s))$.
\end{prop}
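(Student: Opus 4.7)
The plan is to deduce Proposition \ref{prop3ma} from Proposition \ref{p38} via Lemma \ref{semisimplicity}, following the template of Corollaries \ref{0formcor} and \ref{cor2}. Since Proposition \ref{p38} already gives $\dim\Res_1(0)=b_1(o(E_s))$, it suffices to verify the semisimplicity condition for $P_1$ at $\lambda_0=0$: that is, if $u\in\Di'_{E_u^*}\left(M;\widetilde{\mathcal E_0^1}\right)$ satisfies $P_1^2u=0$, then $P_1u=0$.

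Setting $w=P_1u$, we have $w\in\Res_1(0)$, and the task reduces to showing $w=0$. I would first observe that $w$ is closed. Using $\iota_Vw=0$ (since $w\in\widetilde{\mathcal E_0^1}$) together with $\mathcal L_Vw=0$ (since $P_1w=0$), Cartan's formula yields $\iota_Vdw=\mathcal L_Vw-d\iota_Vw=0$, so $dw\in\widetilde{\mathcal E_0^2}$. As $d$ commutes with $\mathcal L_V$, also $P_2(dw)=d(P_1w)=0$, so $dw\in\Res_2(0)=\{0\}$ by Proposition \ref{2forms}.

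By the isomorphism $\Theta:\Res_1(0)\xrightarrow{\sim} H^1(M;o(E_s))$ established in Proposition \ref{p38}, it then suffices to show $\Theta(w)=0$. Here I would follow the strategy of \cite[Lemma 3.4]{dzsurfaces}: the identity $w=-i\mathcal L_Vu=-i(d\iota_V+\iota_Vd)u=-i\iota_V du$ (using $\iota_Vu=0$), together with the holomorphy of $R_0$ near $0$ furnished by Corollary \ref{0formcor}, allows one to construct a distributional primitive $\chi\in\Di'_{E_u^*}(M;o(E_s))$ such that $w-d\chi$ is smooth and exact; Lemma \ref{hodge} is used to regularize the construction and to verify that the resulting cohomology class is well-defined and vanishes. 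Combined with the injectivity of $\Theta$, this gives $w=0$.

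The main obstacle I anticipate is verifying that the orientation twist $o(E_s)$ does not disturb the cohomological manipulations used in \cite{dzsurfaces}. However, because $o(E_s)$ is a flat line bundle and $\widetilde{\Phi}_t$ is the parallel-transport lift along the flow, the structural ingredients---Cartan's formula, the twisted Hodge decomposition of Lemma \ref{hodge}, and the meromorphic resolvent formalism of Section \ref{Sprop4}---all hold for twisted forms in the same way as for ordinary forms, so the argument of \cite{dzsurfaces} transfers with only notational modifications.
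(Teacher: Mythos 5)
Your reduction to the semisimplicity condition, and the preliminary observations that $w=P_1u=-i\,\iota_Vdu$ lies in $\Res_1(0)$ and is closed, are correct and agree with the paper (which calls this element $v$). The gap is in the final step: you assert that one can ``construct a distributional primitive $\chi\in\Di'_{E_u^*}(M;o(E_s))$ such that $w-d\chi$ is smooth and exact,'' i.e.\ that $\Theta(w)=0$, but you give no mechanism for producing $\chi$, and none of the ingredients you cite (the identity $w=-i\iota_Vdu$, holomorphy of $R_0$ near $0$, Lemma \ref{hodge}) yields one. Since $u$ is not closed, $\mathcal L_Vu=\iota_Vdu$ is not $d\iota_Vu$ plus something exact, so there is no formal homotopy-type reason for $[w]$ to vanish in $H^1(M;o(E_s))$; given the injectivity of $\Theta$, the claim $\Theta(w)=0$ is exactly equivalent to $w=0$, so this step is the entire content of the proposition and cannot be waved through. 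It is worth noting that semisimplicity of $P_1$ at $0$ genuinely uses the contact structure (it can fail for general Anosov flows), and the contact form never enters your argument at the decisive point.

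What the paper actually does here is analytic, not cohomological. First it shows $\alpha\wedge du=0$ (writing $\alpha\wedge du=a\,\alpha\wedge d\alpha$ and using $\Res_0(0)=0$ to kill $a$), whence $du=\alpha\wedge v$ with $v=\iota_Vdu$. Then it decomposes $v=w+d\phi$ by Lemma \ref{hodge} with $w$ smooth closed and $\phi\in\Di'_{E_u^*}(M;o(E_s))$, and runs the Stokes computation
\begin{align*}
0=\int_Mdu\wedge w=\int_M\alpha\wedge d\phi\wedge w=\dots=-\la \mathcal L_V\phi,\phi\ra_{L^2\left(M;o(E_s)\right)},
\end{align*}
which by the regularity lemma \cite[Lemma 2.3]{dzsurfaces} forces $\phi$, hence $v$, to be smooth; finally the dynamical contraction/expansion argument of Proposition \ref{0forms} shows a smooth element of $\Res_1(0)$ vanishes. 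To repair your proof you would need to supply this pairing-plus-regularity argument (or an equivalent use of the contact structure); the cohomological framing via $\Theta$ alone does not close the loop.
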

\begin{proof}
By Lemma \ref{semisimplicity}, we must only check that the semisimplicity condition is satisfied. Take $u\in \mathcal D'_{E_u^*}\left(M;\widetilde{\mathcal E_0^1}\right)$ such that $(P_1)^2u=0$. Then $v=\iota_V du\in\Res_1(0)$. It is enough to show that  $v=0$.

Recall that in the proof of Proposition \ref{p38},  we have seen that elements in $ {\rm Res}_1(0)$ are closed. In particular, 
\begin{align}\label{eqdv00}
    dv=0. 
\end{align}

Note that $\alpha\wedge du\in \mathcal D'_{E_u^*}(M;\widetilde{\mathcal E^3})$. We claim that 
\begin{align}\label{eqadu0}
    \alpha\wedge du=0. 
\end{align}
Indeed, there is some $a\in\Di'_{E_u^*}(M;\mathscr{O}(E_s))$ such that 
\begin{align*}
     \alpha\wedge du=a\,  \alpha\wedge d\alpha. 
\end{align*}
Since $\mathcal{L}_V(\alpha)=0$, by \eqref{eqdv00}, we have
\begin{align*}
     ({\mathcal L}_Va) \alpha\wedge d\alpha=\alpha \wedge  {\mathcal L}_V (du)= \alpha\wedge d\iota_Vdu=\alpha\wedge dv=0. 
\end{align*}
%Since $\iota_Vdu\in {\rm Res}_1(0)$, .Choose $s\in C^{\infty}(M;\mathscr{O}(E_s))$. Then $\alpha\wedge du\otimes s=a\,\dvol$ for some $a\in\Di'_{E_u*}(M)$, and $Va\,\dvol=\mathcal{L}_V(\alpha\wedge du\otimes s=a\,\dvol)=0$ near point $p$, where $s$ is chosen as a bump function equal to $1$ near $p$ a local trivialization.
Then $\mathcal{L}_Va=0$, so $a=0$ by Proposition  \ref{0forms}. This gives \eqref{eqadu0}.

Since $\alpha(V)=1$, we have  $(\alpha\wedge)\circ  \iota_V+\iota_V\circ(\alpha\wedge)=\mathrm{id}$. By \eqref{eqadu0}, we have 
\begin{align}\label{eqduav}
    du=((\alpha\wedge)\circ  \iota_V+\iota_V\circ(\alpha\wedge))  du=\alpha\wedge v. 
\end{align}
%We then have (for general $s$) $$s\otimes du=\alpha\wedge\iota_V(s\otimes du)=\alpha\wedge s\otimes v,$$ so $du=\alpha\wedge v$. 

 By Lemma \ref{hodge} and by \eqref{eqdv00}, there are $w\in C^{\infty}\left(M;\widetilde{\mathcal{E}^1}\right)$, $\phi\in\Di'_{E_u^*}\left(M;\mathscr{O}(E_s)\right)$ such that 
 \begin{align}\label{eq27ma}
     v=w+d\phi,\quad dw=0.
 \end{align}
Then
\begin{align}\label{eq27}
    \iota_Vw=\iota_V(v-d\phi)=-\mathcal{L}_V\phi. 
\end{align}
In particular, $\mathcal{L}_V\phi$ is smooth. 
We compute by Stokes' Theorem and by \eqref{eqduav}-\eqref{eq27}, 
\begin{align*}
0&=\re\int_Mdu\wedge \overline{w}=\re\int_M\alpha\wedge d\phi\wedge \overline{w}=\re\int_M\phi \overline{w}\wedge d\alpha\\
&=\re\int_M\iota_V(\phi \overline{w})\,\alpha \wedge d\alpha=-\re\int_M \phi (\overline{\mathcal L_V\phi})\alpha \wedge d\alpha=-\re\la \mathcal L_V\phi,\phi\ra_{L^2\left(M;\mathscr{O}(E_s)\right)}, 
\end{align*}
where the fourth equality comes from 
the fact the 
$$(\alpha \wedge)\circ \iota_V (\phi \overline{w}\wedge d\alpha)=((\alpha \wedge) \circ \iota_V+ \iota_V \circ (\alpha \wedge) )(\phi \overline{w}\wedge d\alpha)=\phi \overline{w}\wedge d\alpha.$$
In the above formula, we use the fact that a product of two twisted forms is  untwisted.  By \cite[Lemma 2.3]{dzsurfaces} we have $\phi\in C^{\infty}\left(M;\mathscr{O}(E_s)\right)$, so $v\in C^{\infty}\left(M;\widetilde{\mathcal{E}_0^1}\right)$. Then by the same argument as in Proposition  \ref{0forms} (see \cite[Lemma 3.5]{dzsurfaces}) we have $v=0$. 
\end{proof}

%Possibly say more at end of proof? Putting all of these results together gives
Now Theorem \ref{contact} is a consequence of \eqref{eqmR0}, Corollaries \ref{0formcor}, \ref{cor2}, and Proposition \ref{prop3ma}.

Let $\Sigma$ be a connected  negatively curved closed surface. Take $M=S^* \Sigma$. By Corollary \ref{cor1}, we have 
	\begin{align*}
		H^{1}(M;\mathscr{O}(E_{s}))= H^{1}(M;\pi^{*}\mathscr{O}(T\Sigma)). 
	\end{align*} 
	
\begin{prop}\label{Gysin}
If $\Sigma$ is a connected negatively curved closed surface (oriented or not), we 
have 
	\begin{align}\label{eqHGY0}
		\dim H^{1}(M;\pi^{*}\mathscr{O}(T\Sigma))=\dim H^{1}(\Sigma). 
	\end{align} 
\end{prop}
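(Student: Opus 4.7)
The plan is to apply a twisted Gysin-type long exact sequence for the circle bundle $\pi: M = S^{*}\Sigma \to \Sigma$. Identifying $M$ with the unit tangent sphere bundle via the Riemannian metric, the rank-$2$ real bundle $E := T\Sigma$ carries an Euler class $e(E) \in H^2(\Sigma; o(T\Sigma))$, and the Thom isomorphism for $E$ (accounting for possible non-orientability by a twist by $o(T\Sigma)$) produces, for any flat line bundle $L$ on $\Sigma$, a long exact sequence
\begin{equation*}
\cdots \to H^{k-2}(\Sigma; L \otimes o(T\Sigma)) \xrightarrow{\cup e(E)} H^k(\Sigma; L) \xrightarrow{\pi^{*}} H^k(M; \pi^{*}L) \to H^{k-1}(\Sigma; L \otimes o(T\Sigma)) \to \cdots.
\end{equation*}
The first step is to specialize to $L = o(T\Sigma)$, so that $L \otimes o(T\Sigma)$ is canonically trivial and the ``shifted'' terms reduce to ordinary real cohomology of $\Sigma$.

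Next I would extract the segment at $k = 1$, which reads
\begin{equation*}
0 \to H^1(\Sigma; o(T\Sigma)) \to H^1(M; \pi^{*}o(T\Sigma)) \to H^0(\Sigma; \R) \xrightarrow{\cup e(E)} H^2(\Sigma; o(T\Sigma)),
\end{equation*}
and identify both end terms: $H^0(\Sigma;\R) = \R$ by connectedness, while $H^2(\Sigma; o(T\Sigma)) \cong \R$ by Poincar\'e duality on the (possibly non-orientable) closed surface $\Sigma$, the twist by $o(T\Sigma)$ providing exactly the ``orientation'' needed. Under these identifications the final arrow sends $1 \mapsto \chi(\Sigma)$ by Gauss--Bonnet; since $\Sigma$ is negatively curved, $\chi(\Sigma) \ne 0$, so the arrow is an isomorphism and exactness forces
\begin{equation*}
H^1(M; \pi^{*}o(T\Sigma)) \cong H^1(\Sigma; o(T\Sigma)).
\end{equation*}

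To finish, I would invoke Poincar\'e duality with local coefficients on $\Sigma$ once more:
\begin{equation*}
H^1(\Sigma; o(T\Sigma)) \cong H_1(\Sigma; o(T\Sigma) \otimes o(T\Sigma)) = H_1(\Sigma; \R),
\end{equation*}
and conclude via universal coefficients over $\R$ that this dimension equals $\dim H^1(\Sigma; \R) = b_1(\Sigma)$, which is \eqref{eqHGY0}.

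The step I expect to be the main obstacle is the careful setup of the twisted Gysin/Thom sequence: for a non-orientable rank-$2$ bundle the Thom class lives in cohomology twisted by $o(T\Sigma)$, so both the ``umkehr'' coefficients and the Euler class acquire extra twists that must be tracked consistently. Once this bookkeeping is in place, both the orientable and non-orientable cases of $\Sigma$ are handled uniformly, since for orientable $\Sigma$ the bundle $o(T\Sigma)$ is trivial and the argument collapses to the classical untwisted Gysin computation.
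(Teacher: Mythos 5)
Your proposal is correct and follows essentially the same route as the paper: extract the $k=1$ segment of the Gysin sequence for $S^*\Sigma\to\Sigma$ with coefficients in $o(T\Sigma)$, show the Euler-class map $H^0(\Sigma)\to H^2(\Sigma;o(T\Sigma))$ is an isomorphism using connectedness, Poincar\'e duality, and the nonvanishing of $e$ from negative curvature, and then apply Poincar\'e duality once more to identify $H^1(\Sigma;o(T\Sigma))$ with $H^1(\Sigma;\R)$ up to dimension. The only cosmetic difference is that the paper observes $e=K\mu$ with $K<0$ directly, while you invoke Gauss--Bonnet to conclude $\chi(\Sigma)\neq 0$; these are the same fact.
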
 	
\begin{proof}	
By the Gysin long exact sequence, we have the exact sequence
	\begin{align*}
\xymatrix{
0\ar[r]& H^{1}(\Sigma;\mathscr{O}(T\Sigma)) \ar[r]^-{\pi^{*}}  
&H^{1}(M;\pi^{*}\mathscr{O}(T\Sigma))\ar[r]^-{\pi_{*}}& H^{0}(\Sigma) \ar[r]^-{ e\wedge} & 
H^{2}(\Sigma;\mathscr{O}(T\Sigma))\ar[r]&,}
	\end{align*} 
where $\pi^{*}$ is the pullback, $\pi_{*}$ is the integration along 
the fibre of $M\to \Sigma$, and $e\in H^{2}(\Sigma;\mathscr{O}(T\Sigma))$ is the Euler class of $T\Sigma$. 

We claim that the last map  
	\begin{align*}
e\wedge		:H^{0}(\Sigma)\to H^{2}(\Sigma;\mathscr{O}(T\Sigma))
	\end{align*} 
	in the Gysin exact sequence is an isomorphism. Indeed, since $\Sigma$ is connected, we have 
	$\dim H^{0}(\Sigma)=1$, and by Poincar\'e duality, $\dim 
	H^{2}(\Sigma;\mathscr{O}(T\Sigma))=1$. It is enough to show that $e\in H^{2}(\Sigma;\mathscr{O}(T\Sigma))$ 
	is non zero, or equivalently $\int_{\Sigma}e\neq0$.  This is a consequence of the fact that $\Sigma$ has negative curvature, as $e=K\mu$ where $\mu$ is the Riemannian density and $K<0$ is the Gauss curvature.  

Therefore, we get an isomorphism  
	\begin{align}\label{eqHGY}
		\pi^{*}: H^{1}(\Sigma;\mathscr{O}(T\Sigma)) \simeq H^{1}(M;\pi^{*}\mathscr{O}(T\Sigma)). 
	\end{align} 
By Poincar\'e duality, we have 
	\begin{align}\label{eqHGY1}
		H^{1}(\Sigma;\mathscr{O}(T\Sigma))\simeq \left(H^{1}(\Sigma)\right)^{*}. 
	\end{align} 
By \eqref{eqHGY} and \eqref{eqHGY1}, we get \eqref{eqHGY0}. 
\end{proof}
Now Corollary \ref{c3} is a consequence of Theorem \ref{contact} and  Proposition \ref{Gysin}. 

\section{Acknowledgements}

We would like to thank Maciej Zworski for suggesting the problem and giving guidance along the way, and Kiran Luecke for some helpful commments. We are indebted to the referees for reading the paper very carefully and providing us the reference \cite{Klingenberg}. Y.~B-W also acknowledges the partial support under the NSF grant DMS-1952939. S.S. acknowledges the partial support under the ANR grant ANR-20-CE40-0017.

\bibliographystyle{acm}
%Fix arxiv source
%arXiv bibliography macro                                           \def\arXiv#1{\href{http://arxiv.org/abs/#1}{arXiv:#1}}

\bibliography{main}

\end{document}